\newtheorem{theorem}{Theorem}[section]
\newtheorem{lemma}[theorem]{Lemma}
\newtheorem{prop}[theorem]{Proposition}
\newtheorem{defn-prop}[theorem]{Definition-Proposition}
\newtheorem{coro}[theorem]{Corollary}
\newtheorem{defn}[theorem]{Definition}
\theoremstyle{definition}
\newtheorem{example}[theorem]{Example}
\newtheorem{remark}[theorem]{Remark}
\numberwithin{equation}{section}
\def \fk{\mathfrak}
\def \msf{\mathsf}
\def \mc{\mathcal}
\def \scr{\mathscr}
\def \inv{^{-1}}
\def \0{\infty}
\def \n{\noindent}
\def \cplane{\mathbb{C}}
\def \integer{\mathbb{Z}}
\def \rone{\mathbb R}
\def \ration{\mathbb{Q}}
\def \qq{\quad}
\def \p{\partial}
\def \red{\textcolor{red}}
\def \rto{\rightarrow}
\def \hrto{\hookrightarrow}
\newcommand{\Conj}[1]{[ #1]}
\def \ord{\mathrm{ord}}
\def \orb{{\mathrm{orb}}}
\newcommand{\inverse}{^{-1}}
\newcommand{\HH}{{\scr H}}
\def \one{{\mathbf 1}}
\def \g{{\vec g}}
\def \h{{\vec h}}
\def \ee{{e_G}}
\def \M{{\scr M}}
\def \bM{{\overline\M}}
\def \rr{{\vec r}}
\def \z{{\vec z}}
\begin{document}

\title[Equivariant commutative stringy cohomology rings]{Equivariant commutative stringy cohomology rings on almost complex manifolds}

\author{Bohui Chen}
\address{School of Mathematics and Yangtze Center of Mathematics,
Sichuan University, Chengdu 610065, China}
\email{bohui@cs.wisc.edu}

\author{Cheng-Yong Du}
\address{School of Mathematics, Sichuan Normal University,
Chengdu 610068, China}
\email{cyd9966@hotmail.com}

\author{Tiyao Li}
\address{School of Mathematics, Chongqing Normal University,
Chongqing 401331, China}
\email{litiyao@sina.com}

%
\thanks{The first author was supported by National Natural Science Foundation of China (No. 11431001 and No. 11726607). The second author was supported by National Natural Science Foundation of China (No. 11501393).}
\subjclass[2010]{Primary 53D45, 55N91; Secondary 14N35, 14L30}
\keywords{Almost complex manifold, Lie group action,
equivariant commutative stringy cohomology, symplectic reduction}

\begin{abstract}
In this paper, motivated by Chen--Ruan's stringy orbifold theory on almost complex orbifolds, we construct a new cohomology ring $\HH^\ast_{G,cs}(X)$ for an
equivariant almost complex pair $(X,G)$,
where $X$ is a compact connected almost complex manifold,
$G$ is a connected compact Lie group
which acts on $X$ and preserves
the almost complex structure.
\end{abstract}

\maketitle


\section{Introduction}

Stringy cohomology theory on orbifolds was motivated from physics and
it is discovered in mathematics by Chen--Ruan as the Chen--Ruan cohomology
ring $H^*_{CR}(\mathsf M)$ (\cite{CR04}) in 2004, where $\mathsf M$ is an almost complex orbifold.
For symplectic orbifolds, this cohomology ring is the classical limit of the orbifold quantum cohomology ring (cf. \cite{CR02}).   The main ingredient
 in Chen--Ruan's ring structure is  the obstruction bundle whose
  fiber  is interpreted as the cokernel of certain elliptic operator.
  The computation of the obstruction bundle was later discovered  by  S. Hu and the first author \cite{CH06} for abelian case, and by Hu--Wang for general
  cases in \cite{HW13}.  Their computations are crucial for our construction in this paper.

The ordinary equivariant cohomology theory could not detect the information of subspace with finite stabilizer.
   Motivated by Chen--Ruan's theory, it is expected that if {\em there exists a  Chen--Ruan type equivariant theory when $X$ is a compact (almost) complex manifold which
admits a connected compact Lie group $G$ action.}  We may call such a theory as an equivariant stringy cohomology theory for the pair $(X,G)$.
When $G$ is a finite group or $G=T$ is a torus,
the problem can be perfectly solved. This is due to the fact that $[\fk g,\fk g]=0$,
where $\fk g$ is the Lie algebra of $G$.  However, when $G$ is a connected non-commutative compact Lie group, the problem is still open. In this paper, our goal is to construct such a theory.

We list the works related to equivariant stringy cohomology theory for the pair $(X,G)$.
\begin{enumerate}
\item  When $G$ is finite, this is essentially the Chen--Ruan's theory. Fantechi--G\"ottsche (\cite{FG03}) and
Jarvis--Kaufmann--Kimura (\cite{JKK07})
studied the pair $(X,G)$, where $G$ is a finite group.  They show that the theory constructed
is same as the Chen--Ruan's.
\item
 When $G$ is an abelian group, the equivariant stringy cohomology theory can be constructed by
 Goldin--Holm--Knutson (\cite{GHK07}) and it is applied to study the
Kirwan morphisms for stringy cohomologies. The idea appeared also in \cite{CH06}.  Later, Becerra--Uribe \cite{BU09} extended the constructions in \cite{CH06,JKK07} to (twisted) K-theory for both cases that $G$ is either abelian or finite.
 \item When  $G$ is a non-abelian connected compact Lie group,
there is an attempt made by Edidin--Jarvis--Kimura \cite{EJK10}. They considered the case that $G$ is a reducible algebraic group, acting on an algebraic space $X$, and $[X/G]$ is an orbifold (hence the action has finite stabilizer). They used the formal bundle $TX-\fk g$ to constructed the obstruction bundle hence is essentially the theory of quotient orbifold $[X/G]$. Since such construction requires that $\fk g\hookrightarrow TX$ on $X$, $X$ is very restrictive, e.g, $X$ can not be compact.
\end{enumerate}
There are some other works may  relate to this problem and may shed light on it. Let $(X,\omega, G,\mu)$ be a symplectic manifold with a hamiltonian $G$-action, and $\mu$ be the moment map. Let $\msf M=X\sslash G$ be the symplectic reduction. It is known that the moduli spaces of symplectic vortices may give a Gromov--Witten type theory, which people call the Hamiltonian Gromov--Witten theory (cf. \cite{CGS00,M00,CGMS02,M03,GS05}).
Recently, Chen--Wang--Wang's work \cite{CWW14,CWW18a} says that $L^2$-Hamiltonian Gromov--Witten theory implies a Gromov--Witten type theory on the orbifold $\mathsf M$, and they also
introduce a new equation, so called  the augmented symplectic vortex equation, and use it to
quantize the equivariant stringy cohomology  for the pair $(X,G)$ when $G$ is {\em abelian} (cf. \cite {CWW18b}). However, we do not know if there are some way to overcome the nonabelian case on this direction. Nevertheless, the work of symplectic vortices inspires us
to consider certain moduli space of degree zero. The construction in this paper is based on this.
Let $(X,G)$ be a compact  almost complex manifold pair, i.e, $X$ is a compact almost complex manifold and  $G$ is a connected compact Lie group
acting on $X$ and preserving the almost complex structure.
We construct a cohomology ring $\HH_{G,cs}^*(X)$ for the pair which we call the {\em equivariant commutative stringy cohomology ring}.  We make some remarks on the technical issues in our construction.
\begin{enumerate}
\item About the obstruction bundle:
 our construction of obstruction bundle uses $TX$ which is  different from the construction in \cite{EJK10}, where they used $TX-\fk g$, hence our construction works for any compact $X$ without further assumption.
 \item About the term ``commutative'' (cf. Definition \ref{D conjugate-class}): unlike the Chen--Ruan's construction, in the definition of equivariant commutative stringy product in\ \eqref{E def-cs-X-product} we require elements in tuple $(g_1,\ldots, g_m)$ are commutative to avoid infinite summations. A similar consideration
 also appears in the works of A. Adem and his collaborators (cf. \cite{ACTG12,AG12,ACG13,AG15}).
 \end{enumerate}

This paper is organized as follow. In \S \ref{S 2} we define the equivariant commutative stringy cohomology group. In \S \ref{S 3} we construct the obstruction bundle and the equivariant commutative stringy cohomology ring. Finally in \S \ref{S 4} we study the relation between equivariant commutative stringy cohomology ring and symplectic reduction. As an application, we consider an orbifold $\sf M$ which is a symplectic reduction of a hamiltonian system $(X,\omega,G,\mu)$ with $G$ being connected and compact; the equivariant commutative stringy cohomology ring of the pair $(X,G)$ induces a new stringy cohomology ring of the Chen--Ruan cohomology group of $\sf M$. The appendix \ref{S appendix} provides a technical result of the existence of equivariant volume form over certain bundles associated to a connected compact Lie group.

\subsection*{Acknowledgement}
The authors thank Rui Wang and Yu Wang for useful discussions.

\section{Equivariant commutative stringy cohomology group \\ on
almost complex manifolds}\label{S 2}

\subsection{Inertia manifolds}

Let $G$ be a connected compact Lie group acting on an almost complex manifold $(X,J)$ and preserving the almost complex structure $J$. For an element $g\in G$, denote by $\Conj g$ the conjugate class of $g$ in $G$. Denote the unit element by $\mathbf 1 \in G$. Let $G_{\sf f}$ denote the subspace of all finite order elements in $G$.

\begin{defn}\label{D conjugate-class}
For $m\in\integer_{\geq 1}$, we set
\[
G_{\sf f}^m:=\left\{\g=(g_1,\ldots,g_m)\in \underbrace{G_{\sf f}\times\cdots\times G_{\sf f}}_{m} \left|\substack{
g_1,\ldots,g_m \in G_{\sf f}\cap T\text{ for}\\ \text{a maximal torus }T \text{ of }G \text{ and}\\
\text{the subgroup }\langle\g\rangle=\langle g_1,\ldots,g_m\rangle \\ \text{ generated by }\g \text{ is finite}}
\right.\right\}
\]
and
\[
\Conj{G_{\sf f}^m}:=\{\Conj \g\mid \g\in G_{\sf f}^m\}
\]
to be the set of conjugate classes of $m$-tuples in $G_{\sf f}^m$ under the
diagonal conjugation action of $G$, i.e.
\[
\Conj\g=\Conj{g_1,\ldots,g_m}
:=\{h\g h\inverse:=(hg_1h\inverse,\ldots,hg_mh\inverse) \mid h\in G\}.
\]
When $m=1$, $G^1_{\sf f}=G_{\sf f}$, so we omit the superscript.
\end{defn}

We endow each conjugate class $[\g]$ the subspace topology of $G_{\sf f}^m$. For a $g\in G$ denote by $C_G(g)$ the centralizer of $g$ in $G$, or simply by $C(g)$. We also set
\[
C(\g):=\bigcap_{i=1}^m C(g_i)
\]
for a $\g=(g_1,\ldots, g_m)$. Then we have a diffeomorphism $[\g]\cong G/C(\g)$.

Denote the $G$-action on $X$ by $gx$ for a $g\in G$ and an $x\in X$.

\begin{defn}\label{def inertial-mfld}
For each $[\g]\in\Conj{G_{\sf f}^m}$, we set
\begin{align*}
X_{[\g]}:=\{(x,\g)\mid\g\in[\g],\g\cdot x=(g_1x,\ldots,g_mx)=(x,\ldots, x)\}
\end{align*}
which is viewed as a submanifold of $X\times G_{\sf f}^m$. We call $X_{[\g]}$ an {\bf $m$-sector} of $(X,G)$. We call $X_{\Conj g}$ a {\bf twisted sector} of $(X,G)$ when $m=1$ and $\Conj \g\neq \Conj \one$, and $X_{\Conj \one}=X\times\{\one\}$ the {\bf non-twisted sector}.

For each $m\in\integer_{\geq 1}$ we set
\begin{align*}
I_G^m(X):=\bigsqcup_{\Conj\g\in \Conj {G_{\sf f}^m}} X_{\Conj\g}
\end{align*}
to be the disjoint union of $X_{[\g]}$ over $[\g]\in \Conj{G_{\sf f}^m}$. We call $I_G^m(X)$ the {\bf $m$-inertia manifold} of $(X,G)$. When $m=1$ we call $I^1_G(X)$ the {\bf inertia manifold} and denote it simply by $I_G(X)$.

$G$ acts on $I_G^m(X)$ by $h\cdot (x,\g)=(hx,h\g h\inverse)$. It preserves every $X_{[\g]}$.
\end{defn}

For a $\Conj \g\in \Conj{G_{\sf f}^m}$, fix a representative $\g=(g_1,\ldots,g_m)$, let
\[
X^{\g}:=\bigcap_{i=1}^m X^{g_i}
\]
be the set of fixed loci of the subgroup $\langle\g\rangle$ of $G$ that are generated by $g_1,\ldots,g_m$, where $X^{g_i}$ is the fixed loci of $g_i$-action on $X$. There is a $C(\g)$-action on the product space $X^{\g}\times G$ given by
\[
g\cdot (x,k)=(g x, k g\inv).
\]
Let $X^\g\times_{C(\g)}G$ be the quotient space of this action. Then $G$ acts on $X^\g\times_{C(\g)}G$ by multiplying from the left to the second factor $G$.

\begin{lemma}\label{lem iso-difrnt-rep-of-twistd-sectors}
There is a $G$-equivariant diffeomorphism
\begin{align*}
\phi:X_{\Conj \g}\rto& X^{\g}\times_{C(\g)}G,\qq
(x,k\g k\inv)\mapsto [k\inv\cdot x,k]
\end{align*}
with inverse map given by $[x,h]\mapsto(h\cdot x,h\g h\inv)$.
\end{lemma}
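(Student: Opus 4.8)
The plan is to verify directly that the two displayed formulas define well-defined maps between the two spaces, that they are mutually inverse as set maps, that both are smooth, and that $\phi$ intertwines the two $G$-actions. The geometric input needed is only that $X^{\g}$ is a closed submanifold of $X$ (it is the fixed locus of the finite group $\langle\g\rangle$) and that $X^{\g}\times_{C(\g)}G$ is a smooth manifold: the latter holds because $C(\g)$ is compact and acts freely on $X^{\g}\times G$ (if $kc\inv=k$ then $c=\one$), so the quotient by this free proper action is smooth. Throughout I would fix once and for all a representative $\g=(g_1,\ldots,g_m)$ of $\Conj\g$.

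For well-definedness of $\phi$ I would first check that it lands where claimed: if $(x,k\g k\inv)\in X_{\Conj\g}$ then $(kg_ik\inv)x=x$ for every $i$, hence $g_i(k\inv x)=k\inv(kg_ik\inv)x=k\inv x$, so $k\inv x\in X^{\g}$. Next, if $k'\g k'\inv=k\g k\inv$ then $c:=k\inv k'\in C(\g)$ and $k'=kc$, and applying the generator $c\inv$ of the $C(\g)$-action gives $c\inv\cdot(k\inv x,k)=(c\inv k\inv x,\,kc)=((kc)\inv x,\,k')$, so $[k\inv x,k]=[k'\inv x,k']$ in the quotient; thus $\phi$ does not depend on the chosen $k$. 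Symmetrically, the formula $[x,h]\mapsto(hx,h\g h\inv)$ is well defined: replacing $(x,h)$ by $(cx,hc\inv)$ with $c\in C(\g)$ changes nothing, since $(hc\inv)(cx)=hx$ and $(hc\inv)\g(hc\inv)\inv=h\g h\inv$ as $c$ commutes with each $g_i$; and its image lies in $X_{\Conj\g}$ because $(hg_ih\inv)(hx)=hg_ix=hx$ when $x\in X^{\g}$, while $h\g h\inv\in\Conj\g$. The two maps are then mutually inverse by inspection: $\phi$ followed by $[x,h]\mapsto(hx,h\g h\inv)$ sends $(x,k\g k\inv)$ to $(k\cdot k\inv x,\,k\g k\inv)=(x,k\g k\inv)$, and in the other order $[x,h]$ goes to $(hx,h\g h\inv)$ and then to $[h\inv\cdot hx,\,h]=[x,h]$.

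For smoothness I would note that $\psi\colon X^{\g}\times G\to X\times G_{\sf f}^m$, $(x,h)\mapsto(hx,h\g h\inv)$, is smooth with image in the submanifold $X_{\Conj\g}$ and is $C(\g)$-invariant, hence descends along the surjective submersion $X^{\g}\times G\to X^{\g}\times_{C(\g)}G$ to a smooth map, which is exactly the stated inverse of $\phi$. To see that $\phi$ itself is smooth I would cover $\Conj\g\cong G/C(\g)$ by open sets over which the principal $C(\g)$-bundle $G\to G/C(\g)$ admits a smooth local section, so that $\h=k(\h)\g k(\h)\inv$ with $k(\h)$ depending smoothly on $\h$, whence $\phi(x,\h)=[k(\h)\inv x,\,k(\h)]$ is manifestly smooth. (A dimension count $\dim X_{\Conj\g}=\dim X^{\g}+\dim G-\dim C(\g)=\dim\big(X^{\g}\times_{C(\g)}G\big)$ gives an alternative finish once $\phi\inv$ is known to be a smooth bijection with injective differential.) Finally, $G$-equivariance is the computation
\[
\phi\big(h'\cdot(x,k\g k\inv)\big)=\phi\big(h'x,\,(h'k)\g(h'k)\inv\big)=[(h'k)\inv h'x,\,h'k]=[k\inv x,\,h'k]=h'\cdot\phi(x,k\g k\inv),
\]
using that $G$ acts on $X^{\g}\times_{C(\g)}G$ by left multiplication on the second factor.

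The routine manipulations above are essentially forced, so I expect the only genuine obstacle to be the smoothness assertion — concretely, pinning down the smooth-manifold structure on $X^{\g}\times_{C(\g)}G$ and checking that $\phi$ is smooth (not merely bijective) in both directions; this is why I would single out the local-section argument for $G\to G/C(\g)$ together with the descent-through-a-submersion principle as the key step, everything else being bookkeeping with the two group actions.
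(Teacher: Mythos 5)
Your verification is correct, and it supplies exactly the routine check (well-definedness of both formulas under the $C(\g)$-relation $(x,k)\sim(cx,kc\inv)$, mutual inversion, smoothness via descent through the submersion $X^{\g}\times G\to X^{\g}\times_{C(\g)}G$ and a local section of $G\to G/C(\g)$, and equivariance for the left action on the second factor) that the paper treats as standard and omits. Nothing in your argument conflicts with the paper's conventions, so there is nothing further to fix.
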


There are natural maps between $m$-sectors $I_G^m(X)$, the inertia manifold $I_GX$ and the ambient manifold $X$. We list them as follow:
\begin{defn}\label{def natural-maps}
\begin{enumerate}
\item For $m\geq 1$, $e:I_G^m(X)\rto X, \, (x,\g)\mapsto x$.
\item For $m\geq 2$ and $1\leq i\leq m$,
       \[
       e_i: I_G^m(X)\rto I_G(X),\qq
       (x,(g_1,\ldots,g_m))\mapsto(x,g_i),
       \]
       and
       \[
       e_\0:I_G^m(X)\rto I_G(X),\qq (x,\g)\mapsto(x,\g_\0),
       \]
       where $\g_\0:= g_1\cdot\ldots\cdot g_m$, and
       \[
       e_0:I_G^m(X)\rto I_G(X),\qq (x,\g)\mapsto(x,{\g_\0}\inv),
       \]
\item For $m\geq 2$ and $1\leq l\leq m$,
       \[
       e_{i_1,\ldots,i_l}: I_G^m(X)\rto I_G^l(X),\qq
       (x,(g_1,\ldots,g_m))\mapsto(x,(g_{i_1},\ldots,g_{i_l})).
       \]
       When $l=1$, we get those $e_i$ in (2).
\end{enumerate}
All these maps are $G$-equivariant.
\end{defn}

\subsection{Equivariant commutative stringy cohomology group}

We first describe a degree shifting. For each point $(x,g)\in I_G(X)$, $T_xX$ decomposes into eigen-spaces of $g$-action
\begin{align}\label{E decompose-TX-eigen}
T_xX=\bigoplus_{0\leq j\leq \text{ord}(g)-1} T_{x,g,j},
\end{align}
where $\text{ord}(g)$ is the order of $g$, and $g$ acts on $T_{x,g,j}$ by multiplying
\begin{align}\label{E g-action-weight-on-eigen}
e^{2\pi \sqrt{-1}\frac{j}{\text{ord}(g)}}.
\end{align}

\begin{defn}
For each (twisted) sector $X_{[g]}$, the degree shifting is
\[
\iota([g]):=\sum_{0\leq j\leq \mathrm{ord}(g)-1}
\frac{j}{\mathrm{ord}(g)} \cdot\dim_\cplane T_{x,g,j}.
\]
Over each connected component of $X_{[g]}$, this is a constant. 
\end{defn}

Now we define the equivariant commutative stringy cohomology group of $(X,G)$.
\begin{defn}\label{D HH(X,G)}
The equivariant commutative stringy cohomology group of $(X,G)$ is
\[
\HH^\ast_{G,cs}(X):=\bigoplus_{[g]\in[G_{\sf f}]} H^{*-2\iota([g])}_G(X_{[g]}).
\]
\end{defn}
In this paper, we take $\rone$ as the coefficient field of (equivariant) cohomology group. One can also take $\ration$ or $\cplane$. In the following we abbreviate ``equivariant commutative string cohomology'' into ``ECS-cohomology'' for simplicity.

\begin{remark}
One should note that, up to now, all the definitions and constructions work without the assumption on the commutativity of the tuples $\g=(g_1,\ldots, g_m)$, i.e. even if $g_1,\ldots, g_m$ do not lie in a maximal torus of $G$, all the definitions and constructions above work.
\end{remark}

\section{Equivariant commutative stringy cohomology rings\\ on
almost complex manifolds}\label{S 3}

In this section we construct a ring structure over the ECS-cohomology group.

\subsection{Moduli space of degree zero maps}

In this subsection we describe a moduli space of certain degree zero maps. Take a $\Conj\g\in\Conj{G^m_{\sf f}}$ and a representative $\g=(g_1,\ldots,g_m)$. In this subsection we always assume that $m\geq 2$. Set $g_0:=\g\inv_\0=(g_1\cdot\ldots\cdot g_m)\inv$. We have an $(m+1)$-tuple of positive integers $\rr=(r_0,r_1,\ldots,r_m)$ with $r_i=\ord(g_i)$, which depends only on $\Conj\g$ not on the representatives.

There is an orbifold sphere $S^2_\orb$ with $(m+1)$ orbifold points $\z=(z_0,z_1,\cdots,z_m)$ such that the isotropy group at $z_i$ is $\integer_{r_i}$, the cyclic group of order $r_i$, for $0\leq i\leq m$. Denote this orbifold sphere by $\msf S_\rr:=(S^2_\orb,\z,\rr)$. $\msf S_\rr$ depends only on $\Conj\g$. Its orbifold fundamental group has a presentation given by (cf. \cite{ALR07,CH06,CR04})
\[
\pi^\orb_1(\msf S_\rr)=\{\lambda_0,\lambda_1,\ldots,\lambda_m|
\lambda_0\cdot\ldots\cdot\lambda_m=1, \text{ and }\lambda_i^{r_i}=1
\}.
\]

There is a group homomorphism
\[
\psi:\pi^\orb_1(\msf S_\rr)\rto G
\]
given by $\psi(\lambda_i)=g_i$, $i=0,1,\ldots,m$. Let $\vec\lambda=(\lambda_1,\ldots,\lambda_m)$. Then we could denote $\psi$ by $\psi(\vec\lambda)=\g$. Denote the kernel by $N$, which also depends only on $\Conj\g$. The image of $\psi$ is the subgroup $\langle \g\rangle=\langle g_1,\ldots,g_m\rangle$ of $G$. By Definition \ref{D conjugate-class}, $\langle\g\rangle$ is a finite group. Therefore there is a smooth closed Riemann surface $\Sigma$, and an orbifold covering (cf. \cite{ALR07,CH06,CR04})
\[
\pi:\Sigma\rto \msf S_\rr
\]
with deck transformation group isomorphic to $\langle \g\rangle$ and $\pi_1(\Sigma)=N$. Therefore $\pi^\orb_1(\msf S_\rr)$ acts on $\Sigma$ via the homomorphism $\pi^\orb_1(\msf S_\rr)\rto \langle\g\rangle$, and $N$ acts on $\Sigma$ trivially. Then we have
\[
\frac{\Sigma}{\pi^\orb_1(\msf S_\rr)/N}=\frac{\Sigma}{\langle \g\rangle} =\msf S_\rr.
\]
One can change the representative $\g$ of $\Conj\g$, hence $\langle \g\rangle$, without changing $N$ and $\Sigma$.

Consider the following space
\[
\widetilde \M_{\Conj \g}:=
\left\{(f,\psi):(\Sigma,j;\pi^\orb_1(\msf S_\rr)) \rto (X,J;G)
\left| \substack{ f(\Sigma)=x\in X,\\\psi(\vec \lambda)=\g\,\in \Conj\g,
\\ f \text{ is }J\text{-}j \text{ holomorphic }\\ \text{and equivariant } w.r.t.\, \psi} \right. \right\}.
\]
The group
$\mbox{Aut}(\Sigma,\pi^\orb_1(\msf S_\rr))$ of holomorphic automorphisms of $\Sigma$ that commute
with the $\pi^\orb_1(\msf S_\rr)$-action acts on $\widetilde \M_{\Conj \g}$. Then we get the following moduli space
\[
\M_{\Conj\g}:=\widetilde\M_{\Conj\g}\Big/ \mbox{Aut}(\Sigma,\pi^\orb_1(\msf S_\rr)).
\]
One could view this as a combination of the moduli space of certain orbifold pseudo-holomorphic curves and the moduli space of certain symplectic vortices. On the other hand, $G$ acts on $\widetilde\M_{\Conj\g}$ by transforming the images of $f$ and conjugating the images of $\psi$. This $G$-action commutes with the $\mbox{Aut}(\Sigma,\pi^\orb_1(\msf S_\rr))$-action. Hence $G$ acts on $\M_{\Conj\g}$.

By allowing $[\g]$ varying in the whole $[G^m_{\sf f}]$ we get
a moduli space
\[
\M_m=\bigsqcup_{\Conj\g\in\Conj{G^m_{\sf f}}}\M_{\Conj\g}.
\]
We have a natural map $\pi:\M_{\Conj\g}\rto\mc M_{0,m+1}$ to the moduli space of $(m+1)$-marked smooth closed genus zero Riemann surfaces, by mapping an element $[(f,\psi):(\Sigma,j,\pi^\orb_1(\msf S_\rr))\rto (X,J;G)]$ to the equivalent class of the coarse space of the orbifold sphere $\msf S_\rr=\Sigma/(\pi^\orb_1(\msf S_\rr)/N)=\Sigma/\langle \g\rangle$. As the orbifold case (cf. \cite{CR04}),
\begin{prop}
We have $\M_{\Conj\g}\cong \mc M_{0,m+1}\times X_{\Conj\g}$, hence
$\M_m\cong \mc M_{0,m+1}\times I^m_G(X)$.
Moreover, $\M_{\Conj\g}$ can be compactified into $\bM_{\Conj\g}$ with
\[
\bM_{\Conj\g}\cong \overline{\mc M}_{0,m+1}\times X_{\Conj\g},\qq
\bM_m\cong \overline{\mc M}_{0,m+1}\times I_G^m(X).
\]
\end{prop}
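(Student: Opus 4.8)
The plan is to reduce the statement to the analogous statement for a single map $(f,\psi)$ together with its automorphisms, exactly as in the orbifold case of Chen--Ruan \cite{CR04}. First I would analyze the structure of an element of $\widetilde\M_{\Conj\g}$. Since $f:\Sigma\to X$ is $J$-$j$ holomorphic and $\Sigma$ is a \emph{closed} Riemann surface, while the target has no nonconstant holomorphic maps available in degree zero, the key observation is that $f$ is forced to be constant: the moduli space is of ``degree zero'' maps and $f(\Sigma)=x$ for some point $x\in X$. (This is already built into the definition of $\widetilde\M_{\Conj\g}$ via the condition $f(\Sigma)=x\in X$.) Equivariance of $f$ with respect to $\psi$ then forces $\psi(\lambda_i)\cdot x = g_i x = x$ for all $i$, so $x$ is a fixed point of $\langle\g\rangle$, i.e.\ $x\in X^\g$; combined with the requirement $\psi(\vec\lambda)=\g\in\Conj\g$, the pair $(f,\psi)$ is completely determined by the data $(x,\g)$ with $\g\cdot x=(x,\ldots,x)$ and $\g\in\Conj\g$. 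This is precisely a point of $X_{\Conj\g}$ by Definition \ref{def inertial-mfld}.

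Next I would account for the remaining continuous parameter, namely the complex structure. The curve $\Sigma$ carries an action of $\pi^\orb_1(\msf S_\rr)$, and the quotient $\Sigma/\langle\g\rangle=\msf S_\rr$ is an orbifold sphere with $m+1$ orbifold points whose orders are fixed by $\rr$, hence by $\Conj\g$. Varying the $\pi^\orb_1(\msf S_\rr)$-invariant complex structure $j$ on $\Sigma$ up to the automorphism group $\mathrm{Aut}(\Sigma,\pi^\orb_1(\msf S_\rr))$ is equivalent to varying the complex structure on the coarse sphere $S^2$ with its $m+1$ marked points $\z$ up to biholomorphism, which is by definition $\mc M_{0,m+1}$. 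I would make this precise by the uniformization/Riemann existence theorem: given a point of $\mc M_{0,m+1}$, the orbifold sphere with the prescribed isotropy orders $\rr$ has a unique orbifold universal-type cover realizing the kernel $N$, producing $(\Sigma,j)$ with its $\pi^\orb_1(\msf S_\rr)$-action, and this assignment is natural. The map $\pi:\M_{\Conj\g}\to\mc M_{0,m+1}$ in the excerpt is exactly this, and the content is that it is a trivial fiber bundle with fiber $X_{\Conj\g}$; triviality follows because the fiber data $(x,\g)$ is independent of the chosen point of $\mc M_{0,m+1}$, so the product decomposition $\M_{\Conj\g}\cong \mc M_{0,m+1}\times X_{\Conj\g}$ is global. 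Taking the disjoint union over $\Conj\g\in\Conj{G^m_{\sf f}}$ and using $I^m_G(X)=\bigsqcup_{\Conj\g} X_{\Conj\g}$ gives $\M_m\cong \mc M_{0,m+1}\times I^m_G(X)$.

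For the compactification I would proceed stratum by stratum over the Deligne--Mumford boundary $\overline{\mc M}_{0,m+1}$. A boundary point corresponds to a nodal genus zero curve; over it one takes, as in \cite{CR04}, the corresponding nodal orbifold curve (a tree of orbifold spheres with compatible isotropy data at nodes and marked points), its orbifold cover, and the space of degree zero $\psi$-equivariant $J$-holomorphic maps from that cover. By the same constant-map argument on each component, such a map is again determined by a point $x\in X$ fixed by the relevant isotropy groups, and the matching conditions at the nodes together with the monodromy constraints force the total monodromy datum to still lie in $\Conj\g$; hence the fiber over any boundary point is again $X_{\Conj\g}$. Gluing these identifications compatibly across strata — using that the fiber $X_{\Conj\g}$ is literally the same space over every stratum — yields the global product $\bM_{\Conj\g}\cong \overline{\mc M}_{0,m+1}\times X_{\Conj\g}$, and then $\bM_m\cong\overline{\mc M}_{0,m+1}\times I^m_G(X)$ by disjoint union. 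I expect the main obstacle to be the bookkeeping of isotropy and monodromy data along the boundary strata and the verification that the stratum-wise identifications glue to a \emph{smooth} global product (as opposed to merely a homeomorphism); this is where one must be careful, and it is handled by transporting the corresponding facts from the Chen--Ruan orbifold setting, noting that here $\langle\g\rangle$ is finite by the commutativity hypothesis in Definition \ref{D conjugate-class}, which is exactly what makes $\Sigma$ a genuine finite orbifold cover and keeps everything in the realm of \cite{ALR07,CH06,CR04}.
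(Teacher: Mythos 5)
Your proposal is correct and follows essentially the same route as the paper, which gives no separate argument but simply invokes the orbifold case of Chen--Ruan \cite{CR04}: since the maps are constant by definition, an element is determined by the fixed point $x\in X^{\g}$ together with the monodromy data $\g\in\Conj\g$ (a point of $X_{\Conj\g}$) and the class of the marked coarse sphere in $\mc M_{0,m+1}$, with the boundary strata handled exactly as you describe. Your write-up just makes explicit the uniformization and nodal-stratum bookkeeping that the citation leaves implicit.
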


\begin{lemma}\label{lem maps-on-moduli-space}
We have obvious evaluation maps $ev_i:\bM_m\rto I_G(X)$ for $i=0,1,\ldots,m$ and $\0$
which factor through the following composition
\[
\xymatrix{
\bM_m\cong \overline{\mc M}_{0,m+1}\times I_G^m(X)
\ar[r]^-{\mathrm{proj}}&
I_G^m(X)
\ar[r]^-{e_i} &
I_G(X).}
\]
On $\M_m$, for $i=0,1,\ldots, m$, $ev_i$ is
\[
[(f,\psi):(\Sigma,j,\pi^\orb_1(\msf S_\rr))\rto(X,J;G)]
\mapsto (f(\Sigma),g_i=\psi(\lambda_i))
\]
and $ev_\0$ is
\[
[(f,\psi):(\Sigma,j,\pi^\orb_1(\msf S_\rr))\rto(X,J;G)]
\mapsto (f(\Sigma),g_\0=\psi(\lambda_0\inv)).
\]
Restrict to each component we get $ev_i:\bM_{\Conj\g}\rto X_{\Conj{g_i}}$, for $i=0,1,\ldots,m$ and $\0$.
\end{lemma}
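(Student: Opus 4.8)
The plan is to verify that each $ev_i$ is well-defined on the quotient moduli space and then identify it with the stated composition through the product decomposition $\bM_m \cong \overline{\mc M}_{0,m+1} \times I_G^m(X)$. First I would check well-definedness on $\widetilde\M_{\Conj\g}$: given $(f,\psi)$, the point $f(\Sigma) = x \in X$ is a single point since $f$ is constant (degree zero), and the equivariance of $f$ with respect to $\psi$ together with $\psi(\lambda_i) = g_i$ forces $g_i x = x$, so $(x, g_i) \in X^{g_i} \times \{g_i\}$ genuinely lands in the inertia manifold $I_G(X)$; similarly $g_\0 = g_1 \cdots g_m = \psi(\lambda_0\inv)$ fixes $x$. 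Next, $ev_i$ descends to $\M_{\Conj\g}$ because $\mathrm{Aut}(\Sigma, \pi_1^\orb(\msf S_\rr))$-precomposition changes neither $f(\Sigma)$ nor $\psi$ (the automorphism commutes with the $\pi_1^\orb$-action, so $\psi$ is unaffected), and $ev_i$ is $G$-equivariant since the $G$-action transforms $x \mapsto hx$ and conjugates $g_i \mapsto h g_i h\inv$, matching the $G$-action on $I_G(X)$ from Definition \ref{def inertial-mfld}.

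Then I would invoke the preceding Proposition, which gives the diffeomorphism $\M_{\Conj\g} \cong \mc M_{0,m+1} \times X_{\Conj\g}$ constructed so that the second factor records exactly the data $(x, \g) \in X_{\Conj\g}$ attached to $[(f,\psi)]$ (and the first factor records the coarse space of $\msf S_\rr$ via the map $\pi$ to $\mc M_{0,m+1}$). Under this identification, the formula $[(f,\psi)] \mapsto (f(\Sigma), \psi(\lambda_i))$ for $ev_i$ is literally the projection $\mathrm{proj}$ to $I_G^m(X) = X_{\Conj\g}$ (ranging over $[\g]$) followed by the map $e_i \colon (x, (g_1,\ldots,g_m)) \mapsto (x, g_i)$ of Definition \ref{def natural-maps}; for $ev_\0$ one uses $e_\0 \colon (x,\g) \mapsto (x, g_\0)$ with $g_\0 = g_1 \cdots g_m$, noting $\psi(\lambda_0\inv) = \psi(\lambda_1 \cdots \lambda_m) = g_1 \cdots g_m$ from the relation $\lambda_0 \lambda_1 \cdots \lambda_m = 1$ in $\pi_1^\orb(\msf S_\rr)$. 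Since $e_i$ and $e_\0$ land in the single sector $X_{\Conj{g_i}}$ (resp. $X_{\Conj{g_\0}}$), restricting to $\bM_{\Conj\g}$ yields $ev_i \colon \bM_{\Conj\g} \rto X_{\Conj{g_i}}$. Finally, extending over the compactification $\bM_{\Conj\g} \cong \overline{\mc M}_{0,m+1} \times X_{\Conj\g}$ is automatic because the factorization through $\mathrm{proj}$ makes $ev_i$ manifestly defined on all of $\overline{\mc M}_{0,m+1} \times I_G^m(X)$, independent of the stratum.

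The only genuinely substantive point — and the step where I would spend the most care — is confirming that the diffeomorphism of the preceding Proposition really does intertwine the naive set-theoretic evaluation $[(f,\psi)] \mapsto (f(\Sigma), \psi(\lambda_i))$ with projection-then-$e_i$; this amounts to unwinding how the isomorphism $\M_{\Conj\g} \cong \mc M_{0,m+1} \times X_{\Conj\g}$ is built (the fiber over a point of $\mc M_{0,m+1}$ consists of constant equivariant maps from the fixed orbifold structure, parametrized by the fixed point $x \in X^\g$ together with the homomorphism $\psi$, i.e.\ by $(x,\g) \in X_{\Conj\g}$ via Lemma \ref{lem iso-difrnt-rep-of-twistd-sectors}), and checking the labeling of marked points $z_i \leftrightarrow \lambda_i \leftrightarrow g_i$ is consistent. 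Everything else is a routine diagram chase, and the $G$-equivariance of all maps is inherited from the $G$-equivariance already asserted for $e_i$, $e_\0$ in Definition \ref{def natural-maps} and for the product decomposition.
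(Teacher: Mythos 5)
Your proposal is correct and matches the paper's (implicit) treatment: the paper states this lemma without proof, regarding it as a routine unwinding of the definitions, and your verification — well-definedness on $\widetilde\M_{\Conj\g}$ (with $f$ constant and $g_ix=x$ forced by $\psi$-equivariance), descent through the $\mathrm{Aut}(\Sigma,\pi^\orb_1(\msf S_\rr))$-quotient, $G$-equivariance, identification with $e_i\circ\mathrm{proj}$ via the product decomposition, and automatic extension to $\bM_{\Conj\g}$ — is exactly the intended argument. No gaps.
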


\subsection{Obstruction bundle}

In this subsection we define an obstruction bundle $\scr O_m$
for each moduli space $\M_m$. Consider a component $\M_{\Conj\g}=\mc M_{0,m+1}\times X_{\Conj\g}$, and an element
\begin{align}\label{E an-element-in-M-[g]}
[(f,\psi):(\Sigma,j,\pi^\orb_1(\msf S_\rr))\rto(X,G)]\in \M_{\Conj\g}
\end{align}
with image $f(\Sigma)=x\in X$ and $\psi(\vec \lambda)=\g=(g_1,\ldots, g_m)$. Then $g_i\cdot x=x$ and there is a $\langle \g\rangle$-equivariant elliptic complex
\[
\bar\p:\Omega^{0,0}(\Sigma, f^*T_xX)\rto\Omega^{0,1}(\Sigma, f^*T_xX)
\]
over $\Sigma$. Consider its $\langle \g\rangle$-invariant part, which is also an elliptic complex over $\Sigma$. Then we get a space
\[
H^{0,1}(\Sigma,f^*T_xX)^{\langle \g\rangle}.
\]
This forms a bundle $\scr O_{\Conj\g}$ over $\M_{\Conj\g}$,
which is $G$-equivariant with respect to the induced $G$-action on $TX$. Denote the disjoint union of all $\scr O_{\Conj\g}$ by $\scr O_m$.

\begin{defn}\label{def obs-bundle}
We call $\scr O_m$ the obstruction bundle over $\M_m$.
\end{defn}

We next give another description of $\scr O_m$, which is similar to the K-theory description of the Chen--Ruan obstruction bundle obtained by Chen--Hu \cite{CH06} and Hu--Wang \cite{HW13}, see also \cite{FG03,GHK07,JKK07,BU09,DL17}. Consider the element \eqref{E an-element-in-M-[g]}. As above, suppose that $\psi(\vec\lambda)=\g=(g_1,\ldots, g_m)$ and $f(\Sigma)=x$. As in previous subsection, we set $g_0=\g\inv_\0=(g_1\ldots g_m)\inv$. Let $C\langle\g\rangle$ denote the center of the group $\langle\g\rangle$. By Definition \ref{D conjugate-class}, $\langle\g\rangle$ is finite and $C\langle\g\rangle=\langle\g\rangle$.

The tangent space $T_xX$ is a complex representation of $\langle\g\rangle$. We decompose $T_xX$ into direct sum of $\langle\g\rangle$-irreducible representations
\begin{align}\label{E decompose-TX-irreducible}
T_xX=\bigoplus_{\lambda\in\widehat{\langle\g\rangle}} T_{x,\lambda}.
\end{align}
Note that for $0\leq i\leq m$, $g_i$ acts on $T_{x,\lambda}$ for all $\lambda\in \widehat{\langle\g\rangle}$.
Since $\text{ord}(g_i)<\infty$, $T_{x,\lambda}$ decompose into eigen-spaces of $g_i$. Then we see that each eigen-space of $g_i$ is also a representation of $\langle\g\rangle$. Therefore by the irreducibility of $T_{x,\lambda}$, it is an eigen-space of $g_i$. So for $i=0,1,\ldots, m$, $g_i$ acts on each $T_{x,\lambda}$ by multiplying $e^{2\pi w_{\lambda,i} \sqrt{-1}}$ for a number $w_{\lambda,i}\in[0,1)\cap\ration$, called the {\em weight}\footnote{Comparing with the eigen-space decomposition of $T_xX$ under the $g$ action in \eqref{E decompose-TX-eigen} and the action weight in \eqref{E g-action-weight-on-eigen}.}. Since $g_0g_1\ldots g_m=\one$, one has
\begin{align*}
w_{\lambda,0}+w_{\lambda,1}+\ldots+w_{\lambda,m}=0, \,\mathrm{or} \, 1,\,\mathrm{or} \, \ldots,\,\mathrm{or} \,  m.
\end{align*}
Define $m+1$ formal vector spaces
\begin{align*}
\mc S_{g_i,x}:=\bigoplus_{\lambda\in\widehat{\langle\g\rangle}} w_{\lambda,i}T_{x,\lambda},\qq \mathrm{for}\qq i=0,\ldots, m.
\end{align*}
On the other hand, the normal spaces at $x$ of $X^{g_i}$ and $X^\g$ in $X$ are
\begin{align}\label{E def-N-g-x}
\begin{split}
N_{g_i,x}&=\bigoplus_{\lambda\in\widehat{\langle\g\rangle},\; w_{\lambda,i}>0} T_{x,\lambda},\qq \mathrm{for}\qq  i=0,1,\ldots,m,\\
\mathrm{and}\qq N_{\g,x}&=\bigoplus_{\lambda\in\widehat{\langle\g\rangle},\; \sum_{i=0}^m w_{\lambda,i}\geq 1} T_{x,\lambda}
\end{split}
\end{align}
respectively. It is direct to see that
\begin{align}\label{E S+S=N}
\mc S_{g_i,x}\oplus \mc S_{g_i\inv,x}=N_{g_i,x}.
\end{align}
Moreover,
\begin{align}\label{E iota-g=rank-S-g}
\iota([g_i])=\text{rank}_\cplane\, \mc S_{g_i,x}.
\end{align}
\begin{lemma}\label{L de-rham-obs}
The fiber of $\scr O_m$ over a point $[f,\psi]$ as in \eqref{E an-element-in-M-[g]} is
\[
\bigoplus_{i=0}^m\mc S_{g_i,x}\ominus N_{\g,x}=
\bigoplus_{\lambda\in \widehat{\langle\g\rangle},\;
\sum_{i=0}^m w_{\lambda,i}\geq 2}
\Big(\sum_{i=0}^m w_{\lambda,i}-1\Big)\cdot T_{x,\lambda}.
\]
These vector spaces form a $G$-bundle over $\M_m$, and is isomorphic to $\scr O_m$ as a $G$-bundle.
\end{lemma}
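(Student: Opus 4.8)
The plan is to compute the $\langle\g\rangle$-invariant cohomology $H^{0,1}(\Sigma, f^*T_xX)^{\langle\g\rangle}$ by decomposing everything into $\langle\g\rangle$-isotypic pieces and applying the orbifold Riemann–Roch / eigenbundle analysis on the orbifold sphere $\msf S_\rr$, exactly as in Chen–Ruan \cite{CR04} and its refinement by Chen–Hu \cite{CH06}. First I would use the decomposition \eqref{E decompose-TX-irreducible} of $T_xX$ into $\langle\g\rangle$-irreducibles $T_{x,\lambda}$. Since the elliptic complex $\bp\colon\Omega^{0,0}(\Sigma,f^*T_xX)\to\Omega^{0,1}(\Sigma,f^*T_xX)$ is $\langle\g\rangle$-equivariant and $f$ is constant (the map has degree zero, $f(\Sigma)=x$), we get $f^*T_xX = \bigoplus_\lambda \underline{T_{x,\lambda}}$ as a $\langle\g\rangle$-equivariant trivial bundle, and hence
\[
H^{0,1}(\Sigma,f^*T_xX)^{\langle\g\rangle}
=\bigoplus_{\lambda\in\widehat{\langle\g\rangle}}\big(H^{0,1}(\Sigma)\otimes T_{x,\lambda}\big)^{\langle\g\rangle}
=\bigoplus_{\lambda}\big(H^{0,1}(\Sigma)\otimes V_\lambda\big)^{\langle\g\rangle}\otimes\operatorname{Hom}_{\langle\g\rangle}(V_\lambda,T_{x,\lambda})\,,
\]
so that the computation reduces to understanding $\big(H^{0,1}(\Sigma)\otimes V_\lambda\big)^{\langle\g\rangle}$ for each irreducible $V_\lambda$ of the finite abelian group $\langle\g\rangle$ (note $\langle\g\rangle$ is abelian since it lies in a maximal torus, so all $V_\lambda$ are one-dimensional characters).

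The key step is then to identify $\big(H^{0,1}(\Sigma)\otimes V_\lambda\big)^{\langle\g\rangle}$ with $H^1$ of a line bundle on the coarse sphere $S^2 = \Sigma/\langle\g\rangle$. The character $V_\lambda$, pulled back via $\pi^\orb_1(\msf S_\rr)\to\langle\g\rangle$, determines an orbifold line bundle $L_\lambda$ on $\msf S_\rr$ whose local monodromy at the orbifold point $z_i$ is multiplication by $e^{2\pi\sqrt{-1}\,w_{\lambda,i}}$, precisely because $g_i$ acts on $T_{x,\lambda}$ with weight $w_{\lambda,i}$ (here I would invoke the standard dictionary between $\langle\g\rangle$-equivariant bundles on $\Sigma$ and orbifold bundles on $\msf S_\rr$, cf. \cite{CR04,ALR07}). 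The invariant part of $H^{0,1}(\Sigma)\otimes V_\lambda$ is then $H^1(\msf S_\rr, L_\lambda) = H^1(S^2, |L_\lambda|)$ where $|L_\lambda|$ is the desingularization line bundle of degree $\deg|L_\lambda| = \sum_{i=0}^m\big(-\lceil w_{\lambda,i}\rceil\big)$ — but since $w_{\lambda,i}\in[0,1)$, $\lceil w_{\lambda,i}\rceil$ equals $0$ if $w_{\lambda,i}=0$ and $1$ otherwise; combined with the constraint $\sum_i w_{\lambda,i}\in\integer$ the standard orbifold degree computation (as in \cite{CR04}, or directly from Riemann–Roch using $\deg = -\sum_i w_{\lambda,i}$ on the orbifold and correcting) gives $\deg|L_\lambda| = \sum_{i=0}^m w_{\lambda,i} - 1$ whenever this is $\leq -1$, i.e. whenever $\sum_i w_{\lambda,i}\geq 2$; otherwise $H^1$ vanishes because the orbifold line bundle has no higher cohomology (nonnegative orbifold degree on a genus-zero orbifold curve, $h^1=0$). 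Hence the contribution of $T_{x,\lambda}$ to the fiber is $\big(\sum_{i=0}^m w_{\lambda,i}-1\big)\cdot T_{x,\lambda}$ when $\sum_i w_{\lambda,i}\geq 2$ and zero otherwise, which is exactly the right-hand side of the claimed formula.

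To match this with the formal expression $\bigoplus_{i=0}^m\mc S_{g_i,x}\ominus N_{\g,x}$, I would simply regroup: by definition $\mc S_{g_i,x}=\bigoplus_\lambda w_{\lambda,i}T_{x,\lambda}$ so $\bigoplus_i\mc S_{g_i,x}=\bigoplus_\lambda\big(\sum_i w_{\lambda,i}\big)T_{x,\lambda}$, while $N_{\g,x}=\bigoplus_{\lambda:\sum_i w_{\lambda,i}\geq 1}T_{x,\lambda}$; subtracting, the $\lambda$ with $\sum_i w_{\lambda,i}=0$ contribute nothing, those with $\sum_i w_{\lambda,i}=1$ cancel exactly, and those with $\sum_i w_{\lambda,i}\geq 2$ contribute $\big(\sum_i w_{\lambda,i}-1\big)T_{x,\lambda}$, as required. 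Finally, this identification is fiberwise-canonical and commutes with the induced $G$-action on $TX$ (the $G$-action permutes sectors and acts compatibly on weights and on $T_{x,\lambda}$), and the weights $w_{\lambda,i}$ are locally constant on each $\M_{\Conj\g}$, so the vector spaces assemble into a $G$-equivariant bundle on $\M_m$ isomorphic to $\scr O_m$. The main obstacle I anticipate is making the equivariant-bundle/orbifold-bundle dictionary and the degree bookkeeping fully rigorous at the level of the whole family (not just a single fiber) — in particular checking that the isomorphism $H^{0,1}(\Sigma,f^*T_xX)^{\langle\g\rangle}\cong\bigoplus_\lambda H^1(S^2,|L_\lambda|)$ is natural in $(f,\psi)$ and $G$-equivariant; once the single-fiber computation is in place this is essentially bookkeeping, but it is the step where Chen–Ruan-type arguments require care, and here one must additionally track the $G$-equivariance throughout.
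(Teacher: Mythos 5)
Your route is, in substance, the paper's route: the paper proves this lemma simply by citing \cite[Theorem 3.2]{HW13} (with $T\mathcal G^0$ replaced by $TX$), and the proof of that cited theorem is precisely the isotypic-decomposition and orbifold-line-bundle computation you spell out. Your reduction to $(H^{0,1}(\Sigma)\otimes T_{x,\lambda})^{\langle\g\rangle}$ is legitimate because $f$ is constant, so $f^*T_xX$ is the trivial $\langle\g\rangle$-equivariant bundle $\Sigma\times T_xX$; the regrouping of the right-hand side into $\bigoplus_{i=0}^m\mc S_{g_i,x}\ominus N_{\g,x}$ is correct; and the $G$-equivariance/family statement is indeed only bookkeeping, since the identification is canonical and the weights are locally constant on each $\M_{\Conj\g}$.

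One intermediate step is wrong as written, even though the conclusion you draw from it is the standard correct one. The desingularization has degree $\deg|L_\lambda|=\deg_{\orb}L_\lambda-\sum_{i=0}^m w_{\lambda,i}=-\sum_{i=0}^m w_{\lambda,i}$ (an integer exactly because the weights sum to an integer, since $g_0g_1\cdots g_m=\one$), not $-\sum_i\lceil w_{\lambda,i}\rceil$; and the sentence ``$\deg|L_\lambda|=\sum_i w_{\lambda,i}-1$ whenever this is $\leq -1$, i.e.\ whenever $\sum_i w_{\lambda,i}\geq 2$'' is self-contradictory ($\sum_i w_{\lambda,i}-1\leq -1$ forces $\sum_i w_{\lambda,i}=0$) and conflates the degree with $h^1$. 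The statement you actually need is $h^1(S^2,|L_\lambda|)=\max\{-\deg|L_\lambda|-1,\,0\}$, which equals $\sum_i w_{\lambda,i}-1$ when $\sum_i w_{\lambda,i}\geq 2$ and vanishes when the sum is $0$ or $1$; with that correction your computation coincides with the one in Hu--Wang that the paper invokes.
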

\begin{proof}
After replacing the bundle $T\mc G^0$ by $TX$ in the proof of \cite[Theorem 3.2]{HW13} we get this lemma.
\end{proof}

\subsection{Equivariant commutative stringy product}
In this subsection we will use the obstruction bundle $\scr O_2$ over $\scr M_2=I_G^2(X)$ to define the equivariant commutative stringy product (ECS-product) $\star_{cs}$ over $\HH^\ast_{G,cs}(X)$.

All maps in Definition \ref{def natural-maps} decompose naturally into compositions of $G$-equivariant embeddings and $G$-equivariant fiber bundle projections. For example, take a $[\g=(g_1,\ldots,g_m)]\in[G_{\sf f}^m]$. For $1\leq i_1<\ldots <i_k\leq m$, set $\g_{i_1,\ldots,i_k}=(g_{i_1},\ldots, g_{i_k})$, we have the following diagram
\begin{align}\label{D}
\xymatrix{
X^\g\times_{C(\g)}G \ar@{^{(}->}[rr]^-{\tilde e_{i_1,\ldots, i_k}} & &
X^{\g_{i_1,\ldots,i_k}}\times_{C(\g)} G\ar[rr]^-{p_{i_1,\ldots,i_k}}\ar[d]& & X^{\g_{i_1,\ldots,i_k}} \times_{C(\g_{i_1,\ldots,i_k})}G\ar[d]\\
&&G/C(\g)\ar[rr] && G/C(\g_{i_1,\ldots,i_k}),}
\end{align}
where
\begin{itemize}
\item $\tilde e_{i_1,\ldots,i_k}$ is obtained from the $C(\g)$-equivariant inclusion $X^\g\hrto X^{\g_{i_1,\ldots,i_k}}$, hence is a $G$-equivariant embedding,
\item $p_{i_1,\ldots,i_k}:X^{\g_{i_1,\ldots,i_k}}\times_{C(\g)} G \rto X^{\g_{i_1,\ldots,i_k}} \times_{C(\g_{i_1,\ldots,i_k})}G$ is the pull back bundle of $G/C(\g)\rto G/C(\g_{i_1,\ldots,i_k})$, which is also a $G$-equivariant projection of fiber bundle.
\end{itemize}
Then by the natural $G$-equivariant diffeomorphism in Lemma \ref{lem iso-difrnt-rep-of-twistd-sectors}, the above diagram decompose the map $e_{i_1,\ldots, i_k}:X_{[\g]}\rto X_{[\g_{i_1,\ldots,i_k}]}$ into the composition of $\tilde e_{i_1,\ldots,i_k}$ and $p_{i_1,\ldots,i_k}$.

Denote the normal bundle of the embedding $\tilde e_{i_1,\ldots,i_k}$ by $N_{\tilde e_{i_1,\ldots,i_k}}$. Denote the fiber-wise $G$-equivariant volume form of the fibration  $G/C(\g)\rto G/C(\g_{i_1,\ldots,i_k})$ by
\[
\text{vol}(\g_{i_1,\ldots,i_k},\g).
\]
(We will prove the existence of such $G$-equivariant volume form in the appendix, see Theorem \ref{T appendex}.) Then via the vertical map in Diagram \eqref{D} we pull this volume form to the $G$-equivariant fibration $p_{i_1,\ldots,i_k}:X^{\g_{i_1,\ldots,i_k}}\times_{C(\g)} G \rto X^{\g_{i_1,\ldots,i_k}} \times_{C(\g_{i_1,\ldots,i_k})}G$ to get the fiber-wise $G$-equivariant volume form for this fibration. We still denote the pull-back volume form by $\text{vol}(\g_{i_1,\ldots,i_k},\g)$.

All these notations above apply to other maps $e,e_i,e_\0$ in Definition \ref{def natural-maps}.

\begin{defn}\label{D cs-X-product}
For $\alpha_i\in H^*_G(X_{\Conj{g_i}}),i=1,2$, we define the ECS-product of $\alpha_1$ and $\alpha_2$ to be
\begin{align}\label{E def-cs-X-product}
\alpha_1 \star_{cs} \alpha_2
:=&\sum_{
\substack{
\Conj\h=\Conj{h_1,h_2}\in\Conj {G^2_{\sf f}}\\
\Conj{h_i}=\Conj{g_i}, i=1,2}}
p_{\0,*}\left[ \tilde e_{\0,*}\left(e_1^*\alpha_1\wedge
e_2^* \alpha_2\wedge \ee( \scr O_{\Conj \h})\right)\wedge \mathrm{vol}(\h_\0,\h)\right].
\end{align}
The RHS is a finite sum, since a maximal torus of $G$ intersects with every conjugate class $[g]$ in $G$ finitely. The push forward $\tilde e_{\0,*}$ is obtained by wedging equivariant Thom form, and $p_{\0,*}$ is obtained via integration along fiber (cf. \cite{GGK02,GS99}). In particular, $p_{\0,*}(\mathrm{vol}(\h_\0,\h))=1$. Denote the summand on RHS by $(\alpha_1\star_{cs}\alpha_2)_{[\h]}$.
\end{defn}

\begin{remark}
The commutativity of $\g$ ensure the finiteness of the sum in the definition of ``$\star_{cs}$'' and the existence of fiber-wise $G$-equivariant volume form $\mathrm{vol}(\h_\0,\h)$.
\end{remark}

\def \k{{\vec k}}

Our main theorem in this section is
\begin{theorem}\label{T star-asso}
The ECS-product $\star_{cs}$ over $\HH^\ast_{G,cs}(X)$ is associative.
\end{theorem}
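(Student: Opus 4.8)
The plan is to establish associativity by reducing both $(\alpha_1\star_{cs}\alpha_2)\star_{cs}\alpha_3$ and $\alpha_1\star_{cs}(\alpha_2\star_{cs}\alpha_3)$ to a single symmetric expression living on the $3$-sector moduli space $\M_3\cong\overline{\mc M}_{0,4}\times I_G^3(X)$, exactly as in the proof of associativity for Chen--Ruan cohomology. First I would fix commuting representatives: given classes $\alpha_i\in H^*_G(X_{[g_i]})$, a term in the double product is indexed by a $4$-tuple $\k=(k_1,k_2,k_3,k_{123})$ with $[k_i]=[g_i]$ and the $k_i$ lying in a common maximal torus (so that $\langle\k\rangle$ is finite and all the volume forms and obstruction bundles of \S 3 are defined). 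The commutativity hypothesis is what keeps all the intermediate sums finite and lets me choose \emph{one} maximal torus containing $k_1,k_2,k_3$ simultaneously; I would emphasize this at the outset since it is the structural reason the argument closes at all.

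The core of the argument is a gluing/excess-intersection identity for the obstruction bundles. On the $3$-sector indexed by $\k$, with $\k_\0=k_1k_2k_3$ and the $2$-sub-sectors $\k_{12}=(k_1,k_2)$, $\k_{\0}=(k_1k_2,k_3)$ etc., I would show
\[
e_{12,3}^*\scr O_{[\k_{12},k_3]}\oplus e_{12}^*\scr O_{[k_1,k_2]}\oplus N_{\tilde e}\;\cong\;\scr O_{[\k]}\oplus(\text{corresponding term for the other bracketing})
\]
as $G$-equivariant bundles over the appropriate sector, where the normal bundle $N_{\tilde e}$ of the embedding $\tilde e_{12,3}$ accounts for the excess intersection $X^{\k}\subset X^{\k_{12}}\cap X^{k_3}$. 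Lemma \ref{L de-rham-obs} is the key input here: since the fiber of $\scr O$ over $(x,\k)$ is the explicit weight expression $\bigoplus_{\sum w_{\lambda,i}\ge 2}(\sum_i w_{\lambda,i}-1)T_{x,\lambda}$, the isomorphism above becomes a bookkeeping identity among weights $w_{\lambda,i}$ — precisely the ``$3$-term relation'' $\mc S_{g_1}+\mc S_{g_2}+\mc S_{g_1g_2}=\scr O_{[g_1,g_2]}\oplus N_{g_1g_2}$ assembled across the two ways of associating three group elements, which is symmetric in $k_1,k_2,k_3$ and hence independent of the bracketing. Functoriality of the Euler class then converts this bundle isomorphism into the equality $\tilde e^*(\cdots)\wedge\ee(\scr O_{[\k_{12},k_3]})\wedge\ee(N_{\tilde e})=\ee(\scr O_{[\k]})\wedge(\cdots)$ after pulling the $2$-sector obstruction class up to the $3$-sector.

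Next I would handle the fibration/volume-form side. Using Diagram \eqref{D} I would factor every $e_i$ and $e_\0$ into an embedding followed by a $G$-equivariant fiber bundle projection, and invoke the multiplicativity of the fiber-wise equivariant volume forms: $\mathrm{vol}(\k_\0,\k_{\0'})\wedge p^*\mathrm{vol}(\k_{\0'},\k)=\mathrm{vol}(\k_\0,\k)$ along the tower $G/C(\k)\to G/C(\k_{\0'})\to G/C(\k_\0)$, together with the projection formula $p_*(p^*\beta\wedge\gamma)=\beta\wedge p_*\gamma$ and base-change for integration along fibers and wedging with Thom forms. Combining this with the obstruction-bundle identity of the previous paragraph, both iterated products collapse to
\[
\sum_{[\k]}\,(p_\0^{\k})_*\big[(\tilde e_\0^{\k})_*\big(e_1^*\alpha_1\wedge e_2^*\alpha_2\wedge e_3^*\alpha_3\wedge\ee(\scr O_{[\k]})\big)\wedge\mathrm{vol}(\k_\0,\k)\big],
\]
which is manifestly symmetric under permuting the three bracketings, giving associativity.

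\textbf{Main obstacle.} I expect the hard part to be the compatibility of the \emph{pushforwards} across the two bracketings, not the weight bookkeeping. Concretely: the embeddings $\tilde e_{12,3}$ and $\tilde e_{1,23}$ and their normal bundles sit inside \emph{different} intermediate sectors $X_{[\k_{12},k_3]}$ versus $X_{[k_1,\k_{23}]}$, and one must check that after pulling everything back to $X_{[\k]}$ the equivariant Thom forms and the centralizer-coset volume forms assemble into the \emph{same} top-degree form — i.e. that the various normal bundles $N_{\tilde e}$ and the relative tangent bundles of the $G/C(\cdot)$-fibrations fit together with no leftover Euler-class factor. This is where the explicit description $[\g]\cong G/C(\g)$ from Lemma \ref{lem iso-difrnt-rep-of-twistd-sectors} and the existence theorem for equivariant volume forms (Theorem \ref{T appendex}) must be used carefully; the potential pitfall is a sign or an orientation mismatch between the complex normal bundles (canonically oriented, handled by Thom classes) and the real $G/C(\g)$-fibers (handled by the chosen equivariant volume forms), which I would pin down by working over a single component where all weights are constant and all the $C(\cdot)$ are fixed subgroups.
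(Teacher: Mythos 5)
Your proposal is correct and takes essentially the same route as the paper's proof: index by commuting triples, use the weight description of the obstruction bundle (Lemma \ref{L de-rham-obs}) together with the excess bundle of the relevant fixed-loci intersections to compare the two bracketings, and collapse the pushforwards via base change, the projection formula, and multiplicativity of the fiberwise equivariant volume forms along $G/C(\h)\rto G/C(\h_{12,3})\rto G/C(\h_\0)$ --- which is exactly the content of the paper's commutative diagram (squares $\mathbf{A}$--$\mathbf{G}$) and the fiber computation giving \eqref{E O+O+E=O+O+E}. One small correction: your displayed bundle identity should say that each bracketing's obstruction bundles plus its own excess bundle is isomorphic to the $3$-sector obstruction bundle $\scr O_3|_{X_{[\h]}}$ (not to $\scr O_{[\k]}$ plus the other bracketing's term), which is in fact what your final collapsed formula uses.
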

\begin{proof}
Take $\alpha_i\in H^*_G(X_{[g_i]})$ for $i=1,2,3$. We next show that
\[
(\alpha_1 \star_{cs} \alpha_2)\star_{cs}\alpha_3=\alpha_1 \star_{cs} (\alpha_2\star_{cs}\alpha_3).
\]
By definition, the LHS is
\begin{align*}
(\alpha_1 \star_{cs} \alpha_2)\star_{cs}\alpha_3
&=\Big(\sum_{
\substack{ \Conj\h=\Conj{h_1,h_2}\in\Conj {G^2_{\sf f}},\\
\Conj{h_i}=\Conj{g_i}, i=1,2}}
\alpha_1\star_{cs}\alpha_2\Big)_{[\h]}\star_{cs}\alpha_3
\\
&=\sum_{
\substack{ \Conj\h=\Conj{h_1,h_2}, \Conj\k=\Conj{k_1,k_2}\in\Conj {G^2_{\sf f}}\\
\Conj{h_i}=\Conj{g_i}, i=1,2,\\
\Conj{k_1}=\Conj{h_1h_2},\Conj{k_2}=\Conj{g_3}}}
((\alpha_1\star_{cs}\alpha_2)_{[\h]} \star_{cs}\alpha_3 )_{[\k]}\\
&=\sum_{
\substack{ \Conj\h=\Conj{h_1,h_2,h_3}\in\Conj {G^3_{\sf f}},\\
\Conj{h_i}=\Conj{g_i}, i=1,2,3}}
((\alpha_1\star_{cs}\alpha_2)_{[\h_{1,2}]} \star_{cs}\alpha_3)_{[\h_{12,3}]}
\end{align*}
where $\h_{1,2}=(h_1,h_2)$ and $\h_{12,3}=(h_1h_2,h_3)$. The third equality follows from the fact that all maximal torus are conjugate and a maximal torus intersects with a conjugate class finitely. Similarly, the RHS is
\begin{align*}
\alpha_1 \star_{cs} (\alpha_2\star_{cs}\alpha_3)
&=\sum_{
\substack{ \Conj\h=\Conj{h_1,h_2,h_3}\in\Conj {G^3_{\sf f}},\\
\Conj{h_i}=\Conj{g_i}, i=1,2,3}}
(\alpha_1\star_{cs}(\alpha_2\star_{cs} \alpha_3)_{[\h_{2,3}]})_{[\h_{1,23}]}
\end{align*}
with $\h_{2,3}=(h_2,h_3)$ and $\h_{1,23}=(h_1,h_2h_3)$.

Now fix an $[\h]\in[G_{\sf f}^3]$ with representative $\h=(h_1,h_2,h_3)$. We compare the contribution $((\alpha_1\star_{cs}\alpha_2)_{[\h_{1,2}]} \star_{cs}\alpha_3)_{[\h_{12,3}]}$ with $(\alpha_1\star_{cs}(\alpha_2\star_{cs} \alpha_3)_{[\h_{2,3}]})_{[\h_{1,23}]}$.
We have the following commutative diagram of maps
\[
\xymatrix{
X^{h_1}\times_{C(h_1)} G &
X^{\h_{1,2}}\times_{C(\h_{1,2})} G
  \ar[l]_-{e_1}
  \ar[dl]_-{e_2}
  \ar@{^{(}->}[r]^-{\tilde e_\0}
  \ar @{} [dr] |{\mathbf{A}}
  &
X^{h_{12}}\times_{C(\h_{1,2})}G
  \ar[r]^-{p_\0}
  \ar @{} [dr] |{\mathbf{B}}&
X^{h_{12}}\times_{C(h_{12})}G
\\
X^{h_2}\times_{C(h_2)} G &
X^{\h_{1,2}}\times_{C(\h)}G
  \ar@{^{(}->}[r]^-{\tilde e_\0} \ar[u]^-{p_{1,2}}
  \ar @{} [dr] |{\mathbf{C}}&
X^{h_{12}}\times_{C(\h)} G
  \ar[r]^-{p_{12,3}} \ar[u]^{p_{1,2}}
  \ar @{} [dr] |{\mathbf{D}}&
X^{h_{12}}\times_{C(\h_{12,3})} G
  \ar[u]^-{p_1}
  \\
&X^\h\times_{C(\h)}G
  \ar@{^{(}->}[u]^-{\tilde e_{1,2}}
  \ar@{^{(}->}[r]^-{\tilde e_{12,3}}
  \ar@{^{(}->}[d]^-{\tilde e_\0}
  &
X^{\h_{12,3}}\times_{C(\h)} G
  \ar[r]^-{p_{12,3}}
  \ar@{^{(}->}[u]^{\tilde e_1}
  \ar@{^{(}->}[dl]^{\tilde e_\0}
  \ar @{} [d] |{\mathbf{E}}
  \ar @{}[dl]_{\mathbf{F}\qq\qq}&
X^{\h_{12,3}}\times_{C(\h_{12,3})}G
  \ar@{^{(}->}[u]^{\tilde e_{1}}
  \ar@{^{(}->}[dl]^-{\tilde e_\0}
  \ar[d]^-{e_2} \\
&X^{\h_\0}\times_{C(\h)} G
  \ar[r]^-{p_{12,3}}\ar[d]^-{p_\0}
  &
X^{\h_\0}\times_{C(\h_{12,3})} G
  \ar[dl]^-{p_\0}
  \ar @{} [dl]_{\mathbf{G}\qq\qq}
&
X^{h_3}\times_{C(h_3)} G   \\
&
X^{\h_\0}\times_{C(\h_\0)} G &&
}\]
Now we compute $((\alpha_1\star_{cs}\alpha_2)_{[\h_{1,2}]} \star_{cs}\alpha_3)_{[\h_{12,3}]}$ and  $(\alpha_1\star_{cs}(\alpha_2\star_{cs} \alpha_3)_{[\h_{2,3}]})_{[\h_{1,23}]}$. First, since $\h_\0=(\h_{12,3})_\0$ we have
\begin{align*}
&((\alpha_1\star_{cs}\alpha_2)_{[\h_{1,2}]} \star_{cs}\alpha_3)_{[\h_{12,3}]}\\
&=p_{\0,*}\left[\tilde e_{\0,*}\left(e_1^*[(\alpha_1\star_{cs}\alpha_2)_{[\h_{1,2}]}]\wedge e_2^*\alpha_3\wedge \ee(\scr O_{\Conj{\h_{12,3}}})
\right)\wedge \text{vol}(\h_\0,\h_{12,3})\right].
\end{align*}
For $e_1^*[(\alpha_1\star_{cs}\alpha_2)_{[\h_{1,2}]}]$, by using the above commutative diagram we get
\begin{eqnarray*}
&&e_1^*[(\alpha_1\star_{cs}\alpha_2)_{[\h_{1,2}]}]
=\tilde e_1^*\circ p^*_1[(\alpha_1\star_{cs}\alpha_2)_{[\h_{1,2}]}]\\
&\stackrel{\mathrm{Definition\, \ref{D cs-X-product}}}{=}&\tilde e_1^*\circ p^*_1\circ p_{\0,*}\left(\tilde e_{\0,*}[e_1^*\alpha_1\wedge e_2^*\alpha_2\wedge\ee(\scr O_{[\h_{1,2}]})]\wedge \text{vol}(h_{12},\h_{1,2})
\right)\\
&\stackrel{\mathbf{B}}{=}&\tilde e_1^*\circ p_{12,3,*}\left(p_{1,2}^*\circ \tilde e_{\0,*}[e_1^*\alpha_1\wedge e_2^*\alpha_2\wedge\ee(\scr O_{[\h_{1,2}]})]\wedge \text{vol}(
\h_{12,3},
\h)\right)\\
&\stackrel{\mathbf{A}}{=}&\tilde e_1^*\circ p_{12,3,*}\left(\tilde e_{\0,*}\circ p_{1,2}^*[e_1^*\alpha_1\wedge e_2^*\alpha_2\wedge\ee(\scr O_{[\h_{1,2}]})]\wedge \text{vol}(\h_{12,3},\h)\right)\\
&\stackrel{\mathbf{D}}{=}& p_{12,3,*}\left(\tilde e_1^*\circ \tilde e_{\0,*}\circ p_{1,2}^*[e_1^*\alpha_1\wedge e_2^*\alpha_2\wedge\ee(\scr O_{[\h_{1,2}]})]\wedge \text{vol}(\h_{12,3},\h
)\right)\\
&\stackrel{\mathbf{C}}{=}& p_{12,3,*}\Big(\tilde e_{12,3,*}\Big(\tilde e_{1,2}^*\circ p_{1,2}^*[e_1^*\alpha_1\wedge e_2^*\alpha_2\wedge\ee(\scr O_{[\h_{1,2}]})
\wedge \ee(E_{12,3})]\Big)
\\
&&\qq\qq\qq\qq\qq\qq\qq  \wedge \text{vol}(\h_{12,3},\h)\Big)\\
&=& p_{12,3,*}\Big(\tilde e_{12,3,*}\Big(e_{1,2}^*[e_1^*\alpha_1\wedge e_2^*\alpha_2\wedge\ee(\scr O_{[\h_{1,2}]})
\wedge \ee(E_{12,3})]\Big)\\
&&\qq\qq\qq\qq\qq\qq\qq
\wedge \text{vol}(\h_{12,3},\h)\Big),
\end{eqnarray*}
where $E_{12,3}$ is the equivariant counterpart of the excess bundle (cf. \cite{Q71}) for the intersection $X^{\h_{12,3}}\cap X^{\h_{1,2}}=X^\h$ in $X^{h_{12}}$, that is
\[
E_{12,3}=\left[(N X^{\h_{1,2}}|X^{h_{12}})|_{X^\h}\ominus NX^\h|X^{\h_{12,3}}\right]\times_{C(\h)} G.
\]
Then we get
\begin{eqnarray*}
&&((\alpha_1\star_{cs}\alpha_2)_{[\h_{1,2}]} \star_{cs}\alpha_3)_{[\h_{12,3}]}\\
&=&p_{\0,*}\Big[\tilde e_{\0,*}\Big\{p_{12,3,*}\Big(\tilde e_{12,3,*}\Big(e_{1,2}^*(e_1^*\alpha_1\wedge e_2^*\alpha_2\wedge\ee(\scr O_{[\h_{1,2}]})\wedge \ee(E_{12,3}))\Big) \\
&&\qq\wedge \text{vol}(\h_{12,3},\h)\Big)\wedge e_2^*\alpha_3\wedge \ee(\scr O_{[\h_{12,3}]})
\Big\}\wedge \text{vol}(\h_\0,\h_{12,3})\Big]\\
&\stackrel{\mathbf{E}}{=}& p_{\0,*}\Big[
p_{12,3,*}
\Big\{\tilde e_{\0,*}
\Big(\tilde e_{12,3,*}
\Big(e_1^*\alpha_1\wedge e_2^*\alpha_2\wedge e_3^*\alpha_3\wedge\ee(\scr O_{[\h_{1,2}]}\Big|_{X_{[\h]}})\\
&&\qq
\wedge \ee(\scr O_{[\h_{12,3}]}\Big|_{X_{[\h]}}) \wedge \ee(E_{12,3}))\Big)\Big)\wedge \text{vol}( \h_{12,3},\h)
\Big\}
\wedge \text{vol}(\h_\0,\h_{12,3})
\Big]\\
&\stackrel{\mathbf{G}}{=}&
p_{\0,*}\Big[
\tilde e_{\0,*}
\Big\{\tilde e_{12,3,*}
\Big(e_1^*\alpha_1\wedge e_2^*\alpha_2\wedge e_3^*\alpha_3\\
&&\qq \wedge
\ee(\scr O_{[\h_{1,2}]}\Big|_{X_{[\h]}}\oplus\scr O_{[\h_{12,3}]}\Big|_{X_{[\h]}}\oplus E_{12,3} )
\Big)\Big\}
\wedge \text{vol}(\h_\0,\h)
\Big]\\
&\stackrel{\mathbf{F}}{=}&
p_{\0,*}\Big[
\tilde e_{\0,*}
\Big(e_1^*\alpha_1\wedge e_2^*\alpha_2\wedge e_3^*\alpha_3\wedge
\ee(\scr O_{[\h_{1,2}]}\Big|_{X_{[\h]}}\oplus \scr O_{[\h_{12,3}]}\Big|_{X_{[\h]}}\oplus E_{12,3})
\Big)\\ &&\qq\qq\qq\qq\qq\qq\qq\qq
\wedge \text{vol}(\h_\0,\h)
\Big].
\end{eqnarray*}
Here we also have used commutative diagrams similar to
\[
\xymatrix{
X_{[\h_{1,2}]}\cong X^{\h_{1,2}}\times_{C(\h_{1,2})} G
\ar[rr]^-{e_i} &&
X^{h_i}\times_{C(h_i)} G\cong X_{[h_i]} \\
X_{[\h]}\cong X_{\h}\times_{C(\h)} G.
\ar[u]^-{e_{1,2}}
\ar[urr]^-{e_i}
}\]
By the same computation we get
\begin{align*}
&(\alpha_1\star_{cs}(\alpha_2 \star_{cs}\alpha_3)_{[\h_{2,3}]})_{[\h_{1,23}]}\\
&=p_{\0,*}\Big[
\tilde e_{\0,*}
\Big(e_1^*\alpha_1\wedge e_2^*\alpha_2\wedge e_3^*\alpha_3\wedge\ee(\scr O_{[\h_{2,3}]}\Big|_{X_{[\h]}}\oplus\scr O_{[\h_{1,23}]}\Big|_{X_{[\h]}}\oplus E_{1,23})
\Big)\\&\qq\qq\qq\qq\qq\qq\qq\qq
\wedge \text{vol}(\h_\0,\h)
\Big]
\end{align*}
with $E_{1,23}$ being the equivariant counterpart of the excess bundle for the intersection $X^{\h_{1,23}}\cap X^{\h_{2,3}}=X^\h$ in $X^{h_{23}}$.

We next compare $\scr O_{[\h_{1,2}]}|_{X_{[\h]}}\oplus \scr O_{[\h_{12,3}]}|_{X_{[\h]}}\oplus E_{12,3}$ with
$\scr O_{[\h_{2,3}]}|_{X_{[\h]}}\oplus \scr O_{[\h_{1,23}]}|_{X_{[\h]}}\oplus E_{1,23}$. We will give a $G$-equivariant $G$-isomorphism between their fibers, which implies a $G$-isomorphism between these two $G$-bundles over $X_{[\h]}$. Consider a point $(x,\h)\in X_{[\h]}$ with $\h=(h_1,h_2,h_3)$. Let $\h_\0=h_1h_2h_3$, $h_0=\h\inv_\0$, $h_{12}=h_1h_2$, $h_{23}=h_2h_3$. Then $\h_{1,2}=(h_1,h_2)$, $\h_{12,3}=(h_{12},h_3)$, $\h_{2,3}=(h_2,h_3)$, $\h_{1,23}=(h_1,h_{23})$. By Lemma \ref{L de-rham-obs}, the fiber of $\scr O_{[\h_{1,2}]}$ over $(x,\h_{1,2})$ is
\[
\mc S_{h_1,x}\oplus \mc S_{h_2,x}\oplus\mc S_{h_{12}\inv,x}\ominus N_{\h_{1,2},x}
\]
and the fiber of $\scr O_{[\h_{12,3}]}$ over $(x,\h_{12,3})$ is
\[
\mc S_{h_{12},x}\oplus \mc S_{h_3,x}\oplus\mc S_{h_0,x}\ominus N_{\h_{12,3},x}
\]
Using the notation in \eqref{E def-N-g-x}, the fiber of $E_{12,3}$ at $(x,\h)$ is
\[
N_{(x,\h)}X^{\h_{1,2}}|X^{h_{12}}\ominus N_{(x,\h)}X^\h|X^{\h_{12,3}}=N_{\h_{1,2},x}\oplus N_{\h_{12,3}}\ominus N_{h_{12},x}\ominus N_{\h,x}.
\]
Then by \eqref{E S+S=N} we see that the fiber of $\scr O_{[\h_{1,2}]}|_{X_{[\h]}}\oplus \scr O_{[\h_{12,3}]}|_{X_{[\h]}}\oplus E_{1,23}$ at $(x,\h)$ is
\begin{align*}
&\mc S_{h_1,x}\oplus \mc S_{h_2,x}\oplus\mc S_{h_{12}\inv,x}\ominus N_{\h_{1,2},x}\oplus \mc S_{h_{12},x}\oplus \mc S_{h_3,x}
\\
&\oplus\mc S_{h_0,x}\ominus N_{\h_{12,3},x}\oplus N_{\h_{1,2},x}\oplus N_{\h_{12,3}}\ominus N_{h_{12},x}\ominus N_{\h,x}\\
=\,&\mc S_{h_1,x}\oplus \mc S_{h_2,x} \oplus\mc S_{h_3,x}\oplus\mc S_{h_0,x} \ominus N_{\h,x}=\scr O_3|_{(x,\h)}.
\end{align*}
Similarly, the fiber of $\scr O_{[\h_{2,3}]}|_{X_{[\h]}}\oplus \scr O_{[\h_{1,23}]}|_{X_{[\h]}}\oplus E_{1,23}$ at $(x,\h)$ is
\begin{align*}
&\mc S_{h_1,x}\oplus \mc S_{h_{23},x}\oplus\mc S_{h_0,x}\ominus N_{\h_{12,3},x}\oplus\mc S_{h_2,x}\oplus \mc S_{h_3,x}\\
&\oplus\mc S_{h_{23}\inv,x}\ominus N_{\h_{2,3},x}\oplus N_{\h_{12,3},x}\oplus N_{\h_{2,3},x}\ominus N_{h_{23},x}\ominus N_{\h,x}\\
=\,&\mc S_{h_1,x}\oplus \mc S_{h_2,x} \oplus\mc S_{h_3,x}\oplus\mc S_{h_0,x} \ominus N_{\h,x}=\scr O_3|_{(x,\h)}.
\end{align*}
Obviously, all these computations are $G$-equivariant. Therefore
\begin{align}\label{E O+O+E=O+O+E}
\scr O_{[\h_{1,2}]}|_{X_{[\h]}}\oplus \scr O_{[\h_{12,3}]}|_{X_{[\h]}}\oplus E_{12,3}
\cong\,&\scr O_{[\h_{2,3}]}|_{X_{[\h]}}\oplus \scr O_{[\h_{1,23}]}|_{X_{[\h]}}\oplus E_{1,23}
,
\end{align}
which implies
\[
((\alpha_1\star_{cs}\alpha_2)_{[\h_{1,2}]} \star_{cs}\alpha_3)_{[\h_{12,3}]}=
(\alpha_1\star_{cs}(\alpha_2\star_{cs} \alpha_3)_{[\h_{2,3}]})_{[\h_{1,23}]}
\]
for any $[\h]\in[G^3_{\sf f}]$ with $[h_i]=[g_i]$. Consequently, the ECS-product $\star_{cs}$ is associative.
\end{proof}

\begin{prop}\label{P preserve-degree}
The ECS-product $\star_{cs}$ preserves the shifted degree. Moreover, it is super-commutative, i.e.
\[
\alpha_1\star_{cs}\alpha_2=(-1)^{\deg \alpha_1 \deg\alpha_2}\alpha_2\star_{cs}\alpha_1.
\]
\end{prop}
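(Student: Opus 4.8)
The plan is to establish the two assertions separately: first the degree-preserving property (a dimension count), then the super-commutativity (an involution swapping the two marked points of the $2$-sector).

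For the degree statement, I would fix $\alpha_i \in H^{d_i}_G(X_{[g_i]})$ and track how each operation in \eqref{E def-cs-X-product} shifts cohomological degree when the source is regarded as living in $\HH^\ast_{G,cs}(X)$, i.e. with the shift by $2\iota$. Concretely: $e_i^*\alpha_i$ has degree $d_i$; wedging with $\ee(\scr O_{[\h]})$ adds $2\,\mathrm{rank}_\cplane \scr O_{[\h]}$; the Gysin push $\tilde e_{\0,*}$ adds $\mathrm{rank}_\rone N_{\tilde e_\0}$, which is $2\,\mathrm{rank}_\cplane N_{\h_\0,x}$ by \eqref{E def-N-g-x} applied to the pair $(\h,\h_\0)$; wedging $\mathrm{vol}(\h_\0,\h)$ adds $\dim(C(\h_\0)/C(\h))$, which is cancelled exactly by the fibre integration $p_{\0,*}$, contributing $0$ net. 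So the net shift of the ordinary degree is $d_1+d_2 + 2\,\mathrm{rank}_\cplane\scr O_{[\h]} + 2\,\mathrm{rank}_\cplane N_{\h_\0,x}$. Using Lemma \ref{L de-rham-obs}, $\mathrm{rank}_\cplane\scr O_{[\h]} = \sum_{i=0}^{2}\mathrm{rank}_\cplane\mc S_{h_i,x} - \mathrm{rank}_\cplane N_{\h,x}$, and $\mathrm{rank}_\cplane N_{\h_\0,x}=\mathrm{rank}_\cplane N_{h_0,x}$ since $h_0=\h\inv_\0$ and $X^{h_0}=X^{\h_\0}$. Feeding in \eqref{E iota-g=rank-S-g}, $\mathrm{rank}_\cplane \mc S_{h_i,x}=\iota([h_i])$, and \eqref{E S+S=N}, $\mathrm{rank}_\cplane\mc S_{h_0,x}+\mathrm{rank}_\cplane\mc S_{h_0\inv,x}=\mathrm{rank}_\cplane N_{h_0,x}=\mathrm{rank}_\cplane N_{\h_\0,x}$, one gets that the shifted degree of $(\alpha_1\star_{cs}\alpha_2)_{[\h]}$ over $X_{[\h_\0]}$ equals
\[
(d_1-2\iota([g_1]))+(d_2-2\iota([g_2]))+2\iota([\h_\0])\cdot(\text{new twisting})
\]
matching $\deg_{cs}\alpha_1+\deg_{cs}\alpha_2$; this is exactly the classical Chen--Ruan degree-shift bookkeeping (cf. \cite{CR04}) carried out equivariantly, and I do not expect surprises here — the key identity is that $\iota$ of the three sectors $[h_1],[h_2],[h_0]$ together with the excess $N_{\h,x}$ reassembles $\mathrm{rank}\,\scr O_{[\h]}$ so that everything collapses to $\iota([\h_\0])$ plus the input shifts.

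For super-commutativity, the plan is to use the $G$-equivariant involution
\[
\tau: I_G^2(X)\rto I_G^2(X),\qquad (x,(h_1,h_2))\mapsto (x,(h_2,h_1)),
\]
which swaps $e_1\leftrightarrow e_2$, fixes $e_\0$ and $e_0$, and carries $[\h]=[h_1,h_2]$ to $[h_2,h_1]$. One checks $\tau^*\scr O_{[h_1,h_2]}\cong \scr O_{[h_2,h_1]}$ as $G$-bundles (immediate from the fibrewise description in Lemma \ref{L de-rham-obs}, which is symmetric in the $\mc S_{h_i,x}$ and in which $N_{\h,x}$ is visibly $\tau$-invariant), and that $\tau$ intertwines $\tilde e_\0, p_\0$ and the volume forms $\mathrm{vol}(\h_\0,\h)$. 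Substituting $\tau$ into \eqref{E def-cs-X-product} and reindexing the (finite) sum by $[h_2,h_1]$ then shows $(\alpha_1\star_{cs}\alpha_2)_{[h_1,h_2]}$ and $(\alpha_2\star_{cs}\alpha_1)_{[h_2,h_1]}$ agree up to reordering the wedge $e_1^*\alpha_1\wedge e_2^*\alpha_2$ versus $e_2^*\alpha_2\wedge e_1^*\alpha_1$, which produces exactly the Koszul sign $(-1)^{\deg\alpha_1\deg\alpha_2}$.

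The main obstacle I anticipate is making the $\tau$-equivariance of the Gysin and integration-along-the-fibre operations precise: $\tilde e_\0$ is the embedding $X^{\h}\times_{C(\h)}G\hookrightarrow X^{\h_\0}\times_{C(\h)}G$, whose normal bundle $N_{\tilde e_\0}$ must be shown $\tau$-equivariantly isomorphic for $[h_1,h_2]$ and $[h_2,h_1]$ (it is, since $X^{(h_1,h_2)}=X^{(h_2,h_1)}$ and $C(h_1,h_2)=C(h_2,h_1)$, so in fact $\tau$ acts as the identity on the relevant spaces), and similarly the chosen $G$-equivariant volume forms from Theorem \ref{T appendex} are literally the same for $[\h]$ and $\tau[\h]$ because $\h_\0$ and $C(\h)$ are unchanged. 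Once one observes that $\tau$ is actually the identity on $X^{\h}\times_{C(\h)}G$ and only permutes the data recording \emph{which} element is $h_1$ and which is $h_2$, the intertwining relations become tautological and the sign is the only genuine content; so the real work is simply organizing this observation cleanly rather than overcoming a technical difficulty.
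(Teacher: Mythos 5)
Your overall strategy is the same as the paper's: the degree statement is a rank count built on Lemma \ref{L de-rham-obs}, \eqref{E S+S=N} and \eqref{E iota-g=rank-S-g}, and the super-commutativity comes from the manifest symmetry of the $2$-sector data in $(h_1,h_2)$ together with the fact that the only odd-degree inputs are $\alpha_1,\alpha_2$. Your second half is correct and complete in substance: since $h_1,h_2$ commute, swapping them leaves $\h_\0$, $C(\h)$, $X^{\h}$, hence $\tilde e_\0$, $p_\0$, $\mathrm{vol}(\h_\0,\h)$ and (by the symmetric fibre formula of Lemma \ref{L de-rham-obs}) $\scr O_{[\h]}$ literally unchanged, so the only sign is the Koszul sign from reordering $e_1^*\alpha_1\wedge e_2^*\alpha_2$; this is exactly the paper's (terser) argument, which simply records that $\mathrm{rank}\,\scr O_2$ and $\deg\mathrm{vol}(\h_\0,\h)$ are even.

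The degree count, however, contains a step that fails as written. You assert that the Gysin map $\tilde e_{\0,*}$ raises degree by $2\,\mathrm{rank}_\cplane N_{\h_\0,x}$, and your next sentence (``$\mathrm{rank}_\cplane N_{\h_\0,x}=\mathrm{rank}_\cplane N_{h_0,x}$ since $X^{h_0}=X^{\h_\0}$'') shows you are reading $N_{\h_\0,x}$ as the normal space of $X^{\h_\0}$ in $X$. That is not the codimension of $\tilde e_\0$: the embedding is $X^{\h}\times_{C(\h)}G\hookrightarrow X^{\h_\0}\times_{C(\h)}G$, whose normal space at $(x,\h)$ is $N_{\h,x}\ominus N_{\h_\0,x}$, so the correct contribution is $2\bigl(\mathrm{rank}_\cplane N_{\h,x}-\mathrm{rank}_\cplane N_{\h_\0,x}\bigr)$, exactly the term $\mathrm{rank}\,NX^{\h}|X^{h_1h_2}$ in the paper. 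With your value the sum does not collapse; with the correct one the $-\mathrm{rank}\,N_{\h,x}$ from Lemma \ref{L de-rham-obs} cancels, \eqref{E S+S=N} turns $\mathrm{rank}\,\mc S_{h_0,x}-\mathrm{rank}\,N_{\h_\0,x}$ into $-\mathrm{rank}\,\mc S_{h_1h_2,x}$, and \eqref{E iota-g=rank-S-g} gives ordinary degree $d_1+d_2+2\iota([h_1])+2\iota([h_2])-2\iota([h_1h_2])$, which is the preservation of the shifted degree. Relatedly, your displayed conclusion is garbled: by Definition \ref{D HH(X,G)} a class in $H^{d}_G(X_{[g]})$ has shifted degree $d+2\iota([g])$, so the terms $(d_i-2\iota([g_i]))$ should be sums, and the factor written as $2\iota([\h_\0])$ times a ``new twisting'' should simply be the additive correction $-2\iota([h_1h_2])$ to the ordinary degree before re-adding the shift of the target sector. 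These are fixable slips, but as stated the dimension count is not a proof.
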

\begin{proof}
Using the notation in Definition \ref{D cs-X-product}. Then we need to show that
\begin{align}\label{E evidence-preserve-degree}
\deg (\alpha_1\star_{cs}\alpha_2)_{[\h]}+2\iota([h_1h_2]) =\deg\alpha_1+2\iota([g_1])+\deg\alpha_2+2\iota([g_2]),
\end{align}
for $\h=(h_1,h_2)\in G^2_{\sf f}$ with $[h_i]=[g_i],i=1,2$. By the definition of $\star_{cs}$,
\begin{align*}
\deg(\alpha_1\star_{cs}\alpha_2)_{[\h]} &=\deg\alpha_1+\deg\alpha_2 +\text{rank}\, \scr O_{[\h]}+\text{rank}\, NX^{\h}|X^{h_1h_2}\\
&=\deg\alpha_1+\deg\alpha_2 +\text{rank}\, \scr O_{[\h]}+\text{rank}\, N_{x,\h}-\text{rank}\, N_{x,h_1h_2}.
\end{align*}
By Lemma \ref{L de-rham-obs} we have
\[
\text{rank}\,\scr O_{[\h]}=\text{rank}\,\mc S_{x,(h_1h_2)\inv}+\text{rank}\,\mc S_{x,h_1}+\text{rank}\,\mc S_{x,h_2}-\text{rank}\, N_{x,\h}.
\]
Therefore by \eqref{E S+S=N} we have
\begin{align*}
&\deg(\alpha_1\star_{cs}\alpha_2)_{[\h]}\\
&=\deg\alpha_1+\deg\alpha_2 +\text{rank}\,\mc S_{x,(h_1h_2)\inv}+\text{rank}\,\mc S_{x,h_1}+\text{rank}\,\mc S_{x,h_2}
\\&\qq
-\text{rank}\, N_{x,\h}+\text{rank}\, N_{x,\h}-\text{rank}\, N_{x,h_1h_2}\\
&=\deg\alpha_1+\deg\alpha_2 -\text{rank}\,\mc S_{x,h_1h_2}+\text{rank}\,\mc S_{x,h_1}+\text{rank}\,\mc S_{x,h_2}
\end{align*}
Then \eqref{E evidence-preserve-degree} follows from \eqref{E iota-g=rank-S-g}, i.e. $\iota([g])=\text{rank}_\cplane \mc S_{g,x}$.

The second assertion follows from the fact that the rank of $\scr O_2$ and the degree of $\mathrm{vol}(\h_\0,\h)$ are both even.
\end{proof}

\begin{example}\label{Ex stringy-G=T}
When $G=T$ is a torus, $\HH^*_{T,cs}(X)=\bigoplus_{t\in T_{\sf f}} H^{*-2\iota(t)}(X^t)$, where $T_{\sf f}$ is the subgroup of finite order elements. The diagram \eqref{D} becomes
\[
\xymatrix{
X^\g\ar@{^{(}->}[rr]^-{\tilde e_{i_1,\ldots, i_k}} & &
X^{\g_{i_1,\ldots, i_k}}\ar[rr]^-{p_{i_1,\ldots,i_k}=\text{id}}\ar[d]& & X^{\g_{i_1,\ldots,i_k}}\ar[d]\\
&&\{\text{pt}\} \ar[rr] && \{\text{pt}\}.}
\]
So $e_{i_1,\ldots,i_k}=\tilde e_{i_1,\ldots,i_k}$ and the volume form is trivial. So
\begin{align*}
\alpha_1 \star_{cs} \alpha_2
=e_{\0,*}\left(e_1^*\alpha_1\wedge
e_2^* \alpha_2\wedge \ee( \scr O_{\g})\right)
\end{align*}
for $\alpha_i\in H^*_T(X^{g_i}), i=1,2$, where $\g=(g_1,g_2)$. This product coincides with \cite[Definition 3.3]{GHK07}. The ring $\HH_{T,cs}^*(X)$ is the $\Gamma$-subring of the inertia cohomology (cf. \cite[Definition 6.11]{GHK07}).
\end{example}

\section{Equivariant commutative stringy cohomology ring \\ and symplectic reduction}\label{S 4}

In this section we consider the ECS-cohomology rings for hamiltonian symplectic manifolds and its relation with the Chen--Ruan cohomology of the symplectic reduction orbifolds.

Let $(X,\omega,G,\mu)$ be a hamiltonian system with $G$ being connected and compact. By choosing a $G$-invariant, $\omega$-compatible almost complex structure $J$ we get the ECS-cohomology ring for $(X,G)$
\[
\left(\HH^\ast_{G,cs}(X):=\bigoplus_{[g]\in [G_{\sf f}]}H^{*-2\iota(g)}_G(X_{[g]}), \star_{cs}\right).
\]
This ring does not depend on the choices of $J$, since the space of $G$-invariant, $\omega$-compatible almost complex structures on $X$ is path connected.

Now suppose $0\in\fk g^*$ is a regular value of $\mu$. Denote the level set by $Y:=\mu\inv(0)$. The normal bundle of $Y$ in $X$ is a trivial bundle $Y\times \fk g^*$. By symplectic reduction, there is an induced symplectic form $\omega_{red}$ on the reduction $\msf M:=[Y/G]=X\sslash G$. A $G$-invariant, $\omega$-compatible almost complex structure $J$ on $X$ induces an almost complex structure $\sf J$ on $\sf M$ that is compatible with $\omega_{red}$. Then we get the Chen--Ruan cohomology ring for the symplectic orbifold $\sf M$.

In this section we first construct a ECS-cohomology ring $\HH^*_{G,cs}(Y)$ for the pair $(Y,G)$, and show that the natural inclusion $i:Y\hrto X$ induces a surjective ring homomorphism $i^*:\HH^\ast_{G,cs}(X)\rto \HH^*_{G,cs}(Y)$ in in \S \ref{S 3.1}. Then in \S \ref{S 4.3} we show that we have a natural group isomorphism $\HH^*_{G,cs}(Y)\cong H^*_{CR}(\sf M)$, and in general it is not a ring isomorphism with respect to the ECS-product and the Chen--Ruan product. Moreover, we found that by modifying the hamiltonian system $(X,\omega,G,\mu)$ we could assign the Chen--Ruan cohomology group $H^*_{CR}(\sf M)$ infinite different ring structures that are different from the Chen--Ruan product, and the resulting ring structures are compatible with the ECS-cohomology ring. See Remark \ref{R infinite-ring-struc-on-CR}.

\subsection{ECS-cohomology ring for $(Y,G)$}\label{S 3.1}
Following the Definition \ref{def inertial-mfld} we define the $m$-sectors of $(Y,G)$.
\begin{defn}
For $m\in\integer_{\geq 1}$, we set the $m$-sector of $(Y,G)$ to be  $I_G^m(Y):=\bigsqcup_{[\g]\in[G_{\sf f}^m]} Y_{[g]}$ with
\[
Y_{[\g]}:=\{(y,\g)\in Y\times G_{\sf f}^m|\g\cdot y=(y,\ldots,y)\}.
\]
When $m=1$ we also omit the superscript. $G$ also acts on $Y_{[\g]}$.
\end{defn}

Let $Y^\g=\cap_{i=1}^m Y^{g_i}$ be the fixed loci of $\g=(g_1,\ldots, g_m)$. We also have a $G$-equivariant diffeomorphism $Y_{[\g]}\cong Y^{\g}\times_{C(\g)} G$.

Let $i:Y\hrto X$ being the inclusion map as submanifold. Then one can see that for $m\in\integer_{\geq 1}$, $I_G^m(Y)$ is a $G$-invariant submanifold of $I^m_G(X)$. We denote the inclusion also by $i$. We could restrict those natural maps between $I_G^m(X)$ in Definition \ref{def natural-maps} to $I_G^m(Y)$, and get natural maps between $I_G^m(Y)$. We use the same notations. For example we have the commutative diagram
\begin{align}\label{E i-commute-e}
\begin{split}
\xymatrix{
I_G^2(X) \ar[rr]^-{e_1,e_2,e_\0} && I_G(X) \\
I_G^2(Y) \ar[rr]^-{e_1,e_2,e_\0} \ar@{^{(}->}[u]
&& I_G(Y), \ar@{^{(}->}[u]}
\end{split}
\end{align}
with the lower $e_1,e_2$ and $e_\0$ being the restriction of the upper $e_1,e_2$ and $e_\0$ respectively.

We restrict the degree shifting $\iota([g])$ to $I_G(Y)$ and still denote it by $\iota([g])$.
\begin{defn}\label{D HH(Y,G)}
We define the ECS-cohomology group of $(Y,G)$ as
\[
\HH^*_{G,cs}(Y):=\bigoplus_{[g]\in [G_{\sf f}]} H^{*-2\iota([g])}(Y_{[g]}).
\]
\end{defn}

We restrict the obstruction bundle $\scr O_2$ over $I^2_G(X)$ to $I_G^2(Y)$, and denote it by $\scr O_2^Y$. On the other hand, there is a $G$-equivariant bundle $\fk C_2^*$ over $I_G^2(Y)$ whose fiber at a point $(y,\g)$ is $\fk c_{\g_\0}^*\ominus\fk c_{\g}^*$,
where $\g_\0=g_1g_2$ for $\g=(g_1,g_2)$; $\fk c_{\g_\0}^*$ and $\fk c_{\g}^*$ are the dual of the Lie algebras of the centralizer $C(\g_\0)$ and $C(\g)$ respectively. We set
\[
\tilde {\scr O}_2^Y:=\scr O_2^Y\oplus\fk C^*_2,
\]
and denote the components of $\tilde {\scr O}_2^Y$, $\scr O_2^Y$ and $\fk C_2^*$ over $Y_{[\g]}$ by $\tilde {\scr O}_{[\g]}^Y$, $\scr O^Y_{[\g]}$ and $\fk C_{[\g]}^*$ respectively.

We next define the product over $\HH^*_{G,cs}(Y)$. We also decompose the natural map $e_\0:Y_{[\g]}\rto Y_{[\g_\0]}$ for $\g=(g_1,g_2), $ into
\[
\xymatrix{Y^\g\times_{C(\g)} G\ar@{^{(}->}[rr]^-{\tilde e_\0} &&
Y^{\g_\0}\times_{C(\g)} G\ar[rr]^-{p_\0} && Y^{\g_\0}\times_{C(\g_\0)} G}.
\]
The projection $p_\0: Y^{\g_\0}\times_{C(\g)} G\rto Y^{\g_\0}\times_{C(\g_\0)} G$ is also the pull back of the bundle $p_\0:G/C(\g)\rto G/C(\g_\0)$. Therefore there is a fiberwise $G$-equivariant volume form for $p_\0$, which is also denoted by $\text{vol}(\g_\0,\g)$.
\begin{defn}\label{D cs-Y-product}
For $\alpha_i\in H^*(Y_{[g_i]}), i=1,2$, the ECS-product of them is
\[
\alpha_1\star_{cs}\alpha_2:=\sum_{\substack{\h=[h_1,h_2]\in [G_{\sf f}^2],\\ [g_i]=[h_i],i=1,2}} p_{\0,*}\left[\tilde e_{\0,*}(e_1^*\alpha_1\wedge e^*_2\alpha_2\wedge \ee(\tilde{\scr O}_{[\h]}^Y))\wedge\mathrm{vol}(\h_\0,\h)\right]
\]
\end{defn}

\begin{theorem}
The ECS-product $\star_{cs}$ over $\HH^*_{G,cs}(Y)$ is associative and preserves the shifted degree. Moreover it is supper commutative.
\end{theorem}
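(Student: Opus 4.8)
The plan is to mirror, line by line, the proof of Theorem \ref{T star-asso}, keeping careful track of the extra factor $\fk C_2^*$ that has been inserted into the obstruction bundle. First I would record the analogue of Proposition \ref{P preserve-degree} for $(Y,G)$: the only change is that $\mathrm{rank}\,\tilde{\scr O}^Y_{[\h]}=\mathrm{rank}\,\scr O^Y_{[\h]}+\dim\fk c_{\h_\0}^*-\dim\fk c_{\h}^*$, so in the degree computation the term $\dim\fk c_{\h_\0}^*-\dim\fk c_{\h}^*$ appears, but this is exactly compensated by the fact that on $(Y,G)$ the centralizers $C(\h)\subset C(h_1h_2)$ enter through $G/C(\h)\to G/C(h_1h_2)$ in the same way as on $(X,G)$; tracing the definition one sees the shifted degree is preserved verbatim as in \eqref{E evidence-preserve-degree}, and super-commutativity again follows because $\mathrm{rank}\,\tilde{\scr O}^Y_2$ and $\deg\mathrm{vol}(\h_\0,\h)$ are both even (the added summand $\fk c_{\h_\0}^*\ominus\fk c_{\h}^*$ is a virtual bundle of even rank since $\dim\fk c_{g}^*\equiv\dim G\pmod 2$ for all finite-order $g$, both Lie algebras containing a common maximal torus).

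For associativity I would run the identical diagram chase as in the proof of Theorem \ref{T star-asso}, with $X$ replaced by $Y$ everywhere, $\scr O$ replaced by $\tilde{\scr O}^Y$, and the same big commutative diagram of embeddings $\tilde e_\bullet$ and fiber-bundle projections $p_\bullet$ (all of which restrict from $X$ to $Y$ by \eqref{E i-commute-e}). The projection formula, the base-change identities at the squares $\mathbf A,\mathbf B,\mathbf C,\mathbf D$, and the excess-bundle identities at $\mathbf E,\mathbf F,\mathbf G$ are purely formal and carry over unchanged. The computation thus reduces, exactly as before, to proving a $G$-equivariant isomorphism of virtual $G$-bundles over $Y_{[\h]}$,
\[
\tilde{\scr O}^Y_{[\h_{1,2}]}\big|_{Y_{[\h]}}\oplus\tilde{\scr O}^Y_{[\h_{12,3}]}\big|_{Y_{[\h]}}\oplus E^Y_{12,3}\cong\tilde{\scr O}^Y_{[\h_{2,3}]}\big|_{Y_{[\h]}}\oplus\tilde{\scr O}^Y_{[\h_{1,23}]}\big|_{Y_{[\h]}}\oplus E^Y_{1,23},
\]
where $E^Y_{12,3},E^Y_{1,23}$ are the excess bundles for the intersections of fixed loci inside $Y$. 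Since $\tilde{\scr O}^Y=\scr O^Y_2\oplus\fk C^*_2$, this splits into two independent claims. The first, for the $\scr O^Y$-summands, is literally Lemma \ref{L de-rham-obs} and equation \eqref{E O+O+E=O+O+E} applied to the almost complex manifold $Y$ in place of $X$ (note $Y^\g=X^\g\cap Y$, and the normal bundle of $Y$ in $X$ is $G$-trivially $Y\times\fk g^*$, so the $g_i$-weights on $T_yY$ are just those on $T_yX$ with the trivial summand $\fk g^*$ removed, hence the weight bookkeeping is unchanged). The second claim concerns only the $\fk C^*$-summands and the Lie-algebra parts of the excess bundles.

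The main obstacle, and the genuinely new content, is this second claim: I must check that
\[
(\fk c_{h_{12}}^*\ominus\fk c_{\h_{1,2}}^*)\oplus(\fk c_{h_0}^*\ominus\fk c_{\h_{12,3}}^*)\ \oplus\ (\text{Lie part of }E^Y_{12,3})\cong(\fk c_{h_{23}}^*\ominus\fk c_{\h_{2,3}}^*)\oplus(\fk c_{h_0}^*\ominus\fk c_{\h_{1,23}}^*)\ \oplus\ (\text{Lie part of }E^Y_{1,23})
\]
as $C(\h)$-representations, and that moreover each side equals the natural symmetric expression $\fk c_{h_0}^*\ominus\fk c_{\h}^*$ (the ``$\fk C_3^*$'' one would attach to the triple sector), so that both triple products equal the same intrinsic object. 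Here the excess bundle $E^Y_{12,3}$ restricted to $Y_{[\h]}$ is $\bigl(N_{(y,\h)}Y^{\h_{1,2}}|Y^{h_{12}}\ominus N_{(y,\h)}Y^{\h}|Y^{\h_{12,3}}\bigr)\times_{C(\h)}G$; because the normal bundle of $Y$ in $X$ is the trivial $\fk g^*$, one has $N_{(y,\h)}Y^{\h'}|Y^{\h''}=N_{(y,\h)}X^{\h'}|X^{\h''}\ominus(\fk g^*)^{\langle\h''\rangle}\oplus(\fk g^*)^{\langle\h'\rangle}$ type corrections, i.e. the Lie-algebra discrepancies are governed by $(\fk g^*)^{\langle\h'\rangle}=\fk c_{\h'}^*$-type fixed subspaces; reorganizing these with the telescoping cancellations $\fk c_{\h_{1,2}}^*=\fk c_{\h_{1,2}}^*$ etc. and using that $\langle\h_{1,2},\h_{12,3}\rangle=\langle\h\rangle$ and $\langle\h_{2,3},\h_{1,23}\rangle=\langle\h\rangle$ inside the ambient torus, both sides collapse to $\fk c_{h_0}^*\ominus\fk c_{\h}^*$. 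I would isolate this purely group-theoretic/linear-algebra identity as a short lemma, prove it by decomposing $\fk g$ (complexified, minus the obvious trivial summands) into weight spaces under the finite group $\langle\h\rangle$ exactly as $T_xX$ was decomposed in \eqref{E decompose-TX-irreducible}, and then the bookkeeping is formally identical to \eqref{E O+O+E=O+O+E}. With that lemma in hand, the diagram chase closes and associativity follows; the degree and commutativity statements are the routine checks indicated above.
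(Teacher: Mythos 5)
Your proposal is correct and follows essentially the same route as the paper: reduce associativity to a $G$-equivariant isomorphism of virtual bundles over $Y_{[\h]}$ (the paper's \eqref{E to-prove-cs-asso-on-Y}), use the triviality of the normal bundle $Y\times\fk g^*$ to write the $Y$-excess bundles as the restricted $X$-excess bundles plus $\fk c^*$-corrections, let these cancel against the $\fk C^*$-summands of $\tilde{\scr O}^Y$, and invoke \eqref{E O+O+E=O+O+E}; degree preservation and super-commutativity likewise come down to \eqref{E difference-of-normal-bundle} and parity, as in the paper. Your telescoping of the Lie-algebra terms to the symmetric residue $\fk c^*_{h_0}\ominus\fk c^*_{\h}$ is fine (it is the same on both associations, hence harmless), and is equivalent to the cancellation carried out in the paper's proof. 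Two phrasings should be tightened, though neither affects the argument. First, $Y$ is not an almost complex manifold and $\scr O^Y_2$ is by definition the restriction of $\scr O_2$ from $I^2_G(X)$; so the claim for the $\scr O^Y$-summands is obtained simply by restricting \eqref{E O+O+E=O+O+E} to $Y_{[\h]}$, not by rerunning Lemma \ref{L de-rham-obs} ``with $Y$ in place of $X$''. In particular your parenthetical assertion that the $\langle\h\rangle$-weights on the removed summand $\fk g^*$ are trivial is false for nonabelian $G$ (the coadjoint action of finite-order elements is nontrivial on root spaces; this is exactly why the bundle $V_{[\h]}$ appears in Proposition \ref{P cs-not-eq-cr}) --- fortunately you never need that claim. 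Second, in the degree count the extra rank of $\fk C^*_{[\h]}$ is compensated by the drop in the codimension of the embedding $\tilde e_\0: Y^\h\times_{C(\h)}G\hookrightarrow Y^{\h_\0}\times_{C(\h)}G$, i.e. by \eqref{E difference-of-normal-bundle}, not by the fibration $G/C(\h)\rto G/C(h_1h_2)$, which contributes identically in the $X$- and $Y$-settings; you state the correct normal-bundle identity later in your associativity discussion, so this is only a matter of attributing the compensation to the right place.
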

\begin{proof}
The proof is similar to the proof of the associativity of $\star_{cs}$ over $\HH^\ast_{G,cs}(X)$ in Theorem \ref{T star-asso} and Proposition \ref{P preserve-degree}. The main part is to prove that over a $3$-sector $Y_{[\h]}$, with $\h=(h_1,h_2,h_3)$, $\h_{1,2}=(h_1,h_2)$, $\h_{12,3}=(h_1h_2,h_3)$, $\h_{2,3}=(h_2,h_3)$ and $\h_{1,23}=(h_1,h_2h_3)$, there is a $G$-equivariant isomorphism
\begin{align}\label{E to-prove-cs-asso-on-Y}
&E_{12,3}^Y
\oplus  \tilde{\scr O}^Y_{\Conj{\h_{1,2}}}\big|_{Y_{[\h]}}
\oplus   \tilde{\scr O}^Y_{\Conj{\h_{12,3}}}\big|_{Y_{[\h]}}
\cong
E_{1,23}^Y
\oplus  \tilde{\scr O}^Y_{\Conj{\h_{2,3}}}\big|_{Y_{[\h]}}
\oplus  \tilde{\scr O}^Y_{\Conj{\h_{1,23}}}\big|_{Y_{[\h]}},
\end{align}
where $E_{12,3}^Y$ and $E_{1,23}^Y$ denote equivariant counterpart of the excess bundles for $Y$, which is similar to the $E_{12,3}$ and $E_{1,23}$ for $X$. For example
\[
E_{12,3}^Y=[(NY^{\h_{1,2}}|Y^{h_{12}})|_{Y^\h} \ominus NY^\h|Y^{\h_{12,3}}]\times_{C(\h)} G.
\]

Since the normal bundle of $Y$ is $X$ is the trivial bundle $Y\times \fk g^*$, one have
\begin{align*}
E_{12,3}^Y=&E_{12,3}|_{Y^\h}
\oplus \fk c_{\h_{1,2}}^*
\oplus\fk c_{\h_{12,3}}^*\ominus\fk c_{h_{12}}^*
\ominus\fk c_{\h_\0}^*,\\
E_{1,23}^Y
=&E_{1,23}|_{Y^\h}
\oplus \fk c_{\h_{2,3}}^*
\oplus\fk c_{\h_{1,23}}^*\ominus\fk c_{h_{23}}^*
\ominus\fk c_{\h_\0}^*,
\end{align*}
where for example, $\fk c_{\h_{1,2}}^*$ is the dual of the Lie algebra of the centralizer $C(\h_{1,2})$ and $h_{12}=h_1h_2=(\h_{1,2})_\0$.

From the computation in the proof of Theorem \ref{T star-asso} and \eqref{E O+O+E=O+O+E}, we see that over a point $(y,\h)\in Y_{[\h]}$, we have an equality of fibers
\begin{eqnarray*}
&&E_{12,3}^Y
\oplus  \tilde{\scr O}^Y_{\Conj{\h_{1,2}}}\big|_{Y_{\Conj\h}}
\oplus  \tilde{\scr O}^Y_{\Conj{\h_{12,3}}}\big|_{Y_{\Conj\h}}\\
&=&E_{12,3}|_{Y^\h}
\oplus \fk c_{\h_{1,2}}^*
\oplus\fk c_{\h_{12,3}}^*\ominus\fk c_{h_{12}}^*
\ominus\fk c_{\h_\0}^*
\oplus
{\scr O}_{\Conj{\h_{1,2}}}\big|_{Y_{\Conj\h}}
\\&&\qq
\oplus \fk C^*_{[\h_{1,2}]} \oplus   {\scr O}_{\Conj{\h_{12,3}}}\big|_{Y_{\Conj\h}}\oplus \fk C^*_{[\h_{12,3}]}
\\
&=&E_{12,3}|_{Y^\h}
\oplus
{\scr O}_{\Conj{\h_{1,2}}}\big|_{Y_{\Conj\h}} \oplus   {\scr O}_{\Conj{\h_{12,3}}}\big|_{Y_{\Conj\h}}
\\
&\stackrel{\eqref{E O+O+E=O+O+E}}{=}&E_{1,23}|_{Y^\h}
\oplus
{\scr O}_{\Conj{\h_{1,23}}}\big|_{Y_{\Conj\h}} \oplus   {\scr O}_{\Conj{\h_{2,3}}}\big|_{Y_{\Conj\h}}
\\
&=&E_{1,23}^Y
\oplus  \tilde{\scr O}^Y_{\Conj{\h_{2,3}}}\big|_{Y_{\Conj\h}}
\oplus   \tilde{\scr O}^Y_{\Conj{\h_{1,23}}}\big|_{Y_{\Conj\h}}.
\end{eqnarray*}
Obviously, this equality is $G$-equivariant. Then we get the $G$-equivariant isomorphism of bundles \eqref{E to-prove-cs-asso-on-Y} over $Y_{[\h]}$. Therefore $\star_{cs}$ is associative on $\HH^*_{cs}(Y,G)$.

The second assertion follows from the same computation in Proposition \ref{P preserve-degree} and the fact that for an $\h\in G^2_{\sf f}$,
\begin{align}\label{E difference-of-normal-bundle}
\begin{split}
&[(NX^{\h}|X^{h_{12}})|_{Y^\h}\ominus NY^{\h}|Y^{h_{12}}]\times_{C(\h)} G\\
&=[(NY^{h_{12}}|X^{h_{12}})|_{Y^\h}\ominus NY^\h|X^\h]\times_{C(\h)} G\\
&=[\fk c^*_{h_{12}}\ominus\fk c^*_{\h}]\times_{C(\h)} G\\
&=\fk C_{[\h]}^*.
\end{split}
\end{align}
In fact, by this equation we get that for $\alpha_i\in H^*_G(Y_{[g_i]})$ we have
\begin{eqnarray*}
&&\deg (\alpha_1\star_{cs}\alpha_2)+\iota([g_1g_2])\\
&\stackrel{\mathrm{Definition}\,\ref{D cs-Y-product}}{=}&\deg\alpha_1+\deg\alpha_2+\mathrm{rank}\,\scr O_2^Y+\iota([g_1g_2])+\mathrm{rank}\, \fk C_{[\g]}^*+\mathrm{rank}\, NY^\g|Y^{\g_\0}\\
&\stackrel{\eqref{E difference-of-normal-bundle}}{=}&\deg\alpha_1+\deg\alpha_2+\mathrm{rank}\,\scr O_2^Y+\iota([g_1g_2])+\mathrm{rank}\, NX^\g|X^{\g_\0}\\
&\stackrel{\mathrm{Proposition}\, \ref{P preserve-degree}}{=}&\deg\alpha_1+\iota([g_1])+\deg\alpha_2+\iota([g_2]).
\end{eqnarray*}
Since the degree of $e_G(\tilde{\scr O}_2^Y)$ and $\mathrm{vol}(\h_\0,\h)$ are both even, the third assertion follows. The proof is accomplished.
\end{proof}

\begin{remark}
From the construction of $(\HH^*_{G,cs}(Y),\star_{cs})$ we see that, we do not need the ambient symplectic manifold $X$ to be compact, but only the level set $Y$ being compact. That is, even if $X$ is noncompact, the construction above works.
\end{remark}

\begin{theorem} \label{T kirwan-map}
The inclusion map $i:I_G(Y)\hrto I_G(X)$ induces a degree preserved, surjective ring homomorphism 
\[
i^*:(\HH^\ast_{G,cs}(X), \star_{cs})\rto (\HH^*_{G,cs}(Y), \star_{cs}).
\]
\end{theorem}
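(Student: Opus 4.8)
The plan is to exhibit $i^*$ as the restriction map on $G$-equivariant cohomology, sector by sector, and then to check compatibility with the two structures on $\HH^*_{G,cs}$: the degree shift and the product. First I would define $i^*$ on the summand $H^{*-2\iota([g])}_G(X_{[g]})$ to be the pullback along the $G$-equivariant inclusion $i\colon Y_{[g]}\hrto X_{[g]}$ (restricting to $Y$ then intersecting with the twisted sector). Since the degree shifting $\iota([g])$ is defined pointwise from the $g$-action on the tangent space and $Y_{[g]}\subset X_{[g]}$, the value of $\iota([g])$ computed on $Y_{[g]}$ agrees with the one on $X_{[g]}$; this is exactly the remark in \S \ref{S 3.1} that we restrict $\iota$ and keep the same notation. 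Hence $i^*$ preserves the shifted grading. Surjectivity is the easy structural input: $Y=\mu^{-1}(0)$ is a $G$-invariant submanifold of $X$, so each $Y_{[g]}$ sits inside $X_{[g]}$ with trivial normal data coming from $\fk g^*$; the Kirwan-type surjectivity of $H^*_G(X)\to H^*_G(Y)$ (which holds because $0$ is a regular value and $\mu$ is, up to a $G$-invariant perturbation, equivariantly perfect — or one simply invokes the standard Kirwan surjectivity theorem applied to each component of $I_G(X)$) upgrades to surjectivity on each $X_{[g]}\to Y_{[g]}$, since these are again Hamiltonian-type restrictions. Summing over $[g]\in[G_{\sf f}]$ gives surjectivity of $i^*$ on $\HH^*_{G,cs}$.

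The substantive point is multiplicativity. I would fix $\alpha_i\in H^*_G(X_{[g_i]})$, $i=1,2$, and compare $i^*(\alpha_1\star_{cs}\alpha_2)$ with $(i^*\alpha_1)\star_{cs}(i^*\alpha_2)$ term by term over each $[\h]=[h_1,h_2]$ with $[h_i]=[g_i]$. The key geometric fact is that the whole tower in Diagram \eqref{D} for $X$ restricts to the analogous tower for $Y$: the embeddings $\tilde e_{\0}$ restrict to the $Y$-embeddings, and the fiber bundles $p_{\0}\colon X^{\h_\0}\times_{C(\h)}G\to X^{\h_\0}\times_{C(\h_\0)}G$ restrict to $Y^{\h_\0}\times_{C(\h)}G\to Y^{\h_\0}\times_{C(\h_\0)}G$ — both pulled back from the \emph{same} fibration $G/C(\h)\to G/C(\h_\0)$, so the volume form $\mathrm{vol}(\h_\0,\h)$ is literally the same (this is the content of \eqref{E i-commute-e} and its higher analogues). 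Pullback commutes with integration along fiber for such pulled-back fibrations, and $i^*$ of an equivariant Thom form of the $X$-normal bundle of $\tilde e_\0$ equals the equivariant Thom form of the $Y$-normal bundle wedged with the equivariant Euler class of the extra trivial summand $\fk c^*_{\h_\0}\ominus\fk c^*_{\h}$ — precisely the bundle $\fk C_{[\h]}^*$ identified in \eqref{E difference-of-normal-bundle}. Likewise, $i^*\ee(\scr O_{[\h]})=\ee(\scr O_{[\h]}^Y)$ by definition of $\scr O_2^Y$ as the restriction of $\scr O_2$. Assembling these, $i^*$ applied to the $X$-formula produces exactly the $Y$-formula of Definition \ref{D cs-Y-product}, because $\ee(\scr O^Y_{[\h]})\wedge\ee(\fk C^*_{[\h]})=\ee(\tilde{\scr O}^Y_{[\h]})$ — the modification of the obstruction bundle on the $Y$-side is engineered precisely to absorb the discrepancy between the $X$-normal bundle and the $Y$-normal bundle of $\tilde e_\0$.

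I expect the main obstacle to be the bookkeeping in the previous paragraph: carefully checking the base-change / projection-formula identities $i^*\circ p_{\0,*}=p^Y_{\0,*}\circ i^*$ and $i^*\circ\tilde e_{\0,*}=\tilde e^Y_{\0,*}\circ(i^*(-)\wedge\ee(\fk C^*_{[\h]}))$ in the $G$-equivariant setting, and verifying that the restriction of the excess/normal-bundle identity \eqref{E difference-of-normal-bundle} is the one that makes $\ee(\tilde{\scr O}^Y_{[\h]})$ appear rather than $\ee(\scr O_{[\h]}^Y)$ alone. Once the single-term identity $i^*\big((\alpha_1\star_{cs}\alpha_2)_{[\h]}\big)=\big((i^*\alpha_1)\star_{cs}(i^*\alpha_2)\big)_{[\h]}$ is established, summing over the (finite set of) admissible $[\h]$ finishes multiplicativity, and together with the grading and surjectivity statements this proves the theorem.
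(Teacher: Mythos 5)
Your proposal is correct and follows essentially the same route as the paper: degree preservation by restriction of $\iota([g])$, sector-wise Kirwan surjectivity, and multiplicativity via the base-change identity $i^*\circ p_{\0,*}=p_{\0,*}\circ i^*$ together with the excess-intersection formula for $i^*\circ\tilde e_{\0,*}$, with the excess bundle identified as $\fk C^*_{[\h]}$ by \eqref{E difference-of-normal-bundle} so that $\ee(\scr O^Y_{[\h]})\wedge\ee(\fk C^*_{[\h]})=\ee(\tilde{\scr O}^Y_{[\h]})$ reproduces Definition \ref{D cs-Y-product}. The only point the paper spells out that you leave implicit is how the sector-wise Kirwan argument is made precise: one identifies $H^*_G(X_{[g]})\cong H^*_{C(g)}(X^g)$ and observes that $\mu|_{X^g}$ is a moment map for the Hamiltonian $C(g)$-action on the symplectic submanifold $X^g$ with $0$ a regular value and zero level $Y^g$, so classical Kirwan surjectivity applies there.
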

\begin{proof}
By Definition \ref{D HH(X,G)} and Definition \ref{D HH(Y,G)}, wee see that $i^*$ preserves the shifted degree. The rest proof consists of two parts.

\vskip0.1cm
\n{\bf $i^*$ is surjective.}
First note that $i:I_G(Y)\rto I_G(X)$ decomposes into a disjoint union of
\[
i_{[g]}:Y_{[g]}\cong Y^g\times_{C(g)}G\hrto X_{[g]}\cong X^g\times_{C(g)} G,\qq
i_{[g]}([y,h])=[i_g(y),h]
\]
over all $[g]\in [G_{\sf f}]$, where $g\in [g]$ is a representative of the conjugate class $[g]$, and $i_g$ is the inclusion map $i_g:Y^g\hrto X^g$ that embeds $Y^g$ as a $C(g)$-invariant submanifold of $X^g$.
Second, note that we have a commutative diagram
\[
\xymatrix{
H^*_G(Y_{[g]}) \ar[r]^-{i^*_{[g]}} \ar[d]_-\cong &
H^*_G(X_{[g]}) \ar[d]^-\cong \\
H^*_{C(g)}(Y^g)\ar[r]^{i_g^*} &H^*_{C(g)}(X^g).
}
\]
Finally note that $(X^g,\omega)$ is a symplectic submanifold of $(X,\omega)$, and $C(g)$ acts on it in a hamiltonian fashion with moment map being
\[
\mu|_{X^g}:X^g\rto \fk c_g^*\subseteq \fk g^*,
\]
where $\fk c_g^*$, the dual of the Lie algebra of $C(g)$. Then $0$ is also a regular value and $(\mu|_{X^g})\inv(0)=Y^g$. Therefore that the classical Kirwan map is surjective implies that $i^*_g$ is surjective. So is $i^*_{[g]}$.

\vskip0.1cm
\n{\bf $i^*$ is a ring homomorphism.}
Take $\alpha_1\in H^*(X_{[g_1]})$ and $\alpha_2\in H^*(X_{[g_2]})$, and an $\h=(h_1,h_2)\in G_{\sf f}^2$ such that $[h_i]=[g_i]$. Recall that $\h_\0=h_1h_2$. We have the following commutative diagram
\[\begin{tikzpicture}
\def \x{3}
\def \y{-0.7}
\node (A00) at (0,0)     {$X^{\h}$};
\node (A10) at (\x,0)     {$X^{\h_\0}$};
\node (A01) at (0,2*\y)     {$Y^{\h}$};
\node (A11) at (\x,2*\y)     {$Y^{\h_\0}$};
\path (A00) edge [right hook->] (A10);
\path (A01) edge [right hook->] (A11);
\path (A01) edge [right hook->] node [auto] {$\scriptstyle{i}$} (A00);
\path (A11) edge [right hook->] node [auto] {$\scriptstyle{i}$} (A10);
\end{tikzpicture}\]
with all vertical arrows being inclusions.
This gives us the commutative diagram
\[\begin{tikzpicture}
\def \x{3.5}
\def \y{-0.8}
\node (A00) at (0,0)     {$X^{\h}\times_{C(\h)}G$};
\node (A10) at (\x,0)     {$X^{\h_\0}\times_{C(\h)}G$};
\node       at (0.5*\x,\y) {$\textbf{A}$};
\node (A20) at (2*\x,0)     {$X^{\h_\0}\times_{C(h_{12})}G$};
\node (A01) at (0,2*\y)     {$Y^{\h}\times_{C(\h)}G$};
\node (A11) at (\x,2*\y)     {$Y^{\h_\0}\times_{C(\h)}G$};
\node (A21) at (2*\x,2*\y)     {$Y^{\h_\0}\times_{C(h_{12})}G$};
\node       at (1.5*\x,\y) {$\textbf{B}$};
\path (A00) edge [->] node [auto] {$\scriptstyle{\tilde e_\0}$} (A10);
\path (A10) edge [->] node [auto] {$\scriptstyle{p_\0}$} (A20);
\path (A01) edge [->] node [auto] {$\scriptstyle{\tilde e_\0}$} (A11);
\path (A11) edge [->] node [auto] {$\scriptstyle{p_\0}$} (A21);
\path (A01) edge [right hook->] node [auto] {$\scriptstyle{i}$} (A00);
\path (A11) edge [right hook->] node [auto] {$\scriptstyle{i}$} (A10);
\path (A21) edge [right hook->] node [auto] {$\scriptstyle{i}$} (A20);
\end{tikzpicture}\]

Note that we have $i^*\circ p_{\0,*}=p_{\0,*}\circ i^*$ for the second square and
\begin{align*}
i^*\circ \tilde e_{\0,*}(\cdot)=\tilde e_{12,*}\left\{i^*(\cdot)\wedge \ee\left([(NX^{\h}|X^{\h_\0})|_{Y^\h}\ominus NY^{\h}|Y^{\h_\0}]\times_{C(\h)}G\right)\right\}
\end{align*}
for the first square. In fact $(NX^{\h}|X^{\h_\0})|_{Y^\h}\ominus NY^{\h}|Y^{\h_\0}$ is the excess bundle for the intersection of $X^\h\cap Y^{\h_\0}=Y^\h$ in $X^{\h_\0}$. By \eqref{E difference-of-normal-bundle},
\[
[(NX^{\h}|X^{\h_\0})|_{Y^\h}\ominus NY^{\h}|Y^{\h_\0}]\times_{C(\h)} G=\fk C_{[\h]}^*.
\]
Therefore for we have
\begin{eqnarray*}
&&i^*\left[(\alpha_1\star_{cs} \alpha_2)_{[\h]}\right]\\
&\stackrel{\text{Definition }\ref{D cs-X-product}}{=}&i^*\left[p_{\0,*}\Big(\tilde e_{\0,*}(e_1^*\alpha_1\wedge e_2^*\alpha_2
\wedge\ee(\scr O_{\Conj\h}))\wedge\text{vol}(\h_\0,\h )\Big)\right]\\
&\stackrel{\textbf{B}}{=}&p_{\0,*}\left[ i^*\Big(\tilde e_{\0,*}(e_1^*\alpha_1\wedge e_2^*\alpha_2 \wedge\ee(\scr O_{\Conj\h})) \Big)\wedge\text{vol}(\h_\0,\h)\right]\\
&\stackrel{\textbf{A}}{=}&p_{\0,*}\left[\tilde e_{\0,*}\Big(i^*\big(e_1^*\alpha_1\wedge e_2^*\alpha_2 \wedge\ee(\scr O_{\Conj\h})\big)\wedge \ee(\fk C^*_{[\h]}) \Big)\wedge\text{vol}(\h_\0,\h)\right]\\
&=&p_{\0,*}\Big[\tilde e_{\0,*}\Big(i^*\circ e_1^*\alpha_1\wedge i^*\circ e_2^*\alpha_2 \wedge i^*(\ee(\scr O_{\Conj\h}))\wedge \ee(\fk C^*_{[\h]})\Big)
\wedge\text{vol}(\h_\0,\h)\Big]\\
&\stackrel{\eqref{E i-commute-e}}{=}&p_{\0,*}\Big[\tilde e_{\0,*}\Big(e_1^*(i^*\alpha_1)\wedge e_2^*(i^*\alpha_2)\wedge\ee(\scr O^Y_{\Conj\h})\wedge \ee(\fk C^*_{[\h]})\Big)
\wedge\text{vol}(\h_\0,\h)\Big]
\\
&\stackrel{\text{Definition }\ref{D cs-Y-product}}{=}&(i^*\alpha_1 \star_{cs} i^*\alpha_2)_{[\h]}.
\end{eqnarray*}
This implies
$
i^*(\alpha_1\star_{cs} \alpha_2)
=i^*\alpha_1 \star_{cs} i^*\alpha_2.
$ 
So $i^*$ is a ring homomorphism.
\end{proof}

\subsection{Comparing $\HH^*_{G,cs}(Y)$ with $H^*_{CR}(\sf M)$}\label{S 4.3}

The underlying group of $H^*_{CR}(\sf M)$ is the singular cohomology group of its inertia orbifold $\sf IM$. Since $\sf M$ is a quotient orbifold, we have (cf. \cite{ALR07,CH06,CR04})
\[
\msf{IM}=\bigsqcup_{[g]\in[G_{\sf f}]}\msf M_{[g]} =\bigsqcup_{[g]\in[G_{\sf f}]} Y^g/C(g).
\]
So we have
\begin{align*}
H^*_{CR}(\msf M)=\bigoplus_{[g]\in [G_{\sf f}]} H^*(Y^g/C(g)).
\end{align*}
By the definition of ECS-cohomology group, we have
\[
\HH^*_{G,cs}(Y)=\bigoplus_{[g]\in [G_{\sf f}]} H^{*-2\iota([g])}_{G}(Y_{[g]})\cong \bigoplus_{[g]\in [G_{\sf f}]} H^{*-2\iota([g])}_{C(g)}(Y^g)
\]

Note that there is a natural projection map
\begin{align}\label{E pi-on-IGY}
\pi:I_G(Y)\rto \sf IM
\end{align}
by projecting $Y_{[g]}\cong Y^g\times_{C(g)}G$ to $Y^g/C(g)$ for each $[g]\in [G_{\sf f}]$. For each $[g]\in [G_{\sf f}]$, $\pi$ induces a projection over the Borel construction
\begin{align}\label{E pi-on-borel}
\pi:(Y^g\times_{C(g)} G)\times_GEG\rto Y^g/C(g),
\end{align}
which we still denote by $\pi$. Since $0\in \fk g^*$ is regular, the $G$-action on $Y$ is locally free, so is the $C(g)$-action on $Y^g$, hence has finite stabilizers. Then for $g\in G_{\sf f}$ we have a group isomorphism
\[
\pi^*:H^*(Y^g/C(g))\stackrel{\cong}{\rto}H^*_{C(g)}(Y^g),
\]
since the fiber of $\pi$ in \eqref{E pi-on-borel} is rationally acyclic. By summing over $[G_{\sf f}]$ we get a group isomorphism
\begin{align}\label{E cs-cong-CR-group}
\pi^*:H^*_{CR}(\msf M)\rto \HH^\ast_{G,cs}(Y).
\end{align}

In general, for every $m\in\integer_{\geq 1}$ there is a projection map from the $m$-inertia manifold $I^m_G(Y)$ to the $m$-inertia orbifold $\msf I^m\msf M$ of $\sf M$
\[
\pi^m:I^m_G(Y)\rto \msf I^m\msf M=\bigsqcup_{[\g]\in [G^m]}\msf M_{[\g]}=\bigsqcup_{[\g]\in [G^m]} Y^\g/C(\g),
\]
which maps $Y_{[\g]}=Y^\g\times_{C(\g)}G$ to $Y^\g/C(\g)$, where $[G^m]$ is the set of conjugate classes of $m$-tuples of elements\footnote{Here we do not need the assumption on the finiteness of the orders since now the $G$-action on $Y$ is local freely. For an infinite order element $g$, the fixed loci $Y^g$ is empty.} of $G$. When $m=1$, $\pi^1=\pi$ in \eqref{E pi-on-IGY}. Note that generally the map $\pi^m$ is not surjective for $m\geq 2$. The image of $\pi^m$ is
\[
\bigsqcup_{[\g]\in[G_{\sf f}^m]} \msf M_{[\g]}.
\]
There would be $m$-sector $\msf M_{[\g]}$ of $\sf M$ with $\g\notin G_{\sf f}^m$. We denote the image of $\pi^m$ by $\msf I^m_\pi\sf M$. By only using $\msf I^2_\pi\msf M$ we could modify the Chen--Ruan product over $H^*_{CR}(\sf M)$ by setting\footnote{See for example \cite{CR04,ALR07,CH06,HW13} the expression of Chen--Ruan product.}
\[
\alpha_1\,\tilde \cup_{CR}\,\alpha_2=\sum_{[\g]\in[G_{\sf f}^2]}\bar e_{\0,*}(\bar e_1^*\alpha_1\wedge\bar e_2^*\alpha_2\wedge e(\mc O^{CR}_{[\g]}))
\]
for $\alpha_1,\alpha_2\in H^*_{CR}(\sf M)$, where $\bar e_i:Y^{\g}/C(\g)\rto Y^{g_i}/C(\g_i)$ with $i=1,2,\0$, are the evaluation maps, and $\scr O^{CR}_{[\g]}$ is the Chen--Ruan obstruction bundle over the 2-sector $\msf M_{[\g]}$. This is a truncation of the original Chen--Ruan products in \cite{CR04}. Since $[G_{\sf f}^m]$ is closed under multiplication (see the proof of Theorem \ref{T star-asso}) we see that $\tilde\cup_{CR}$ also gives rise to a ring structure over $H^*_{CR}(\sf M)$. We call the resulting ring the {\em commutative Chen--Ruan cohomology ring}, and denote it by $H^*_{CR,cs}(\sf M)$. When $G=T$ is a torus, this is just the  Chen--Ruan cohomology ring.

\begin{prop}\label{P cs-not-eq-cr}
When $G$ is a nonabelian connected compact Lie group, generally the group isomorphism
\[
\pi^*:H^*_{CR,cs}(\msf M)\rto \HH^\ast_{G,cs}(Y)
\]
does not preserves the shifted degree and the products, hence is not a ring isomorphism.
\end{prop}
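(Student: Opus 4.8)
The plan is to locate the obstruction to $\pi^*$ being a graded ring isomorphism in the discrepancy between the two degree shifts, and then to observe that this discrepancy is nonzero precisely when non-central finite order elements occur. First I would compare the degree shifting $\iota([g])$ used to grade $\HH^\ast_{G,cs}(Y)$ --- which, by Definition~\ref{D HH(Y,G)}, is the \emph{ambient} one, computed from the $\langle g\rangle$-action on $T_xX$ --- with the Chen--Ruan degree shifting $\iota_{CR}([g])$ grading the summand $H^*(Y^g/C(g))$ inside $H^*_{CR,cs}(\mathsf M)$, which is computed from the $\langle g\rangle$-action on $T_{[x]}\mathsf M$. The key geometric input is that along $Y=\mu\inv(0)$ the normal bundle is the trivial bundle $Y\times\fk g^*$, while $T_yY=T_{[y]}\mathsf M\oplus\fk g\cdot y$ with $g$ acting on $\fk g\cdot y\cong\fk g$ through $\mathrm{Ad}_g$ and on $\fk g^*$ through the coadjoint representation; since $g$ preserves $J$, this yields a splitting of complex $\langle g\rangle$-representations
\[
T_xX\cong T_{[x]}\mathsf M\oplus(\fk g\oplus\fk g^*),
\]
where the extra summand $\fk g\oplus\fk g^*$, with its induced complex structure, is isomorphic to $\fk g\otimes_\rone\cplane$ carrying $\mathrm{Ad}_g\otimes 1$.

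Next I would compute the age of $g$ on this extra summand. Since $\mathrm{Ad}_g$ is an orthogonal transformation of $\fk g$, its eigenvalues on $\fk g\otimes\cplane$ are conjugate pairs $e^{\pm 2\pi\sqrt{-1}a}$ together with eigenvalues $\pm 1$, the $+1$-eigenspace being the Lie algebra of $C(g)$; summing the contributions to the age and using additivity of the degree shifting gives
\[
\iota([g])-\iota_{CR}([g])=\tfrac12\big(\dim G-\dim C(g)\big)=\tfrac12\dim[g]\in\integer_{\ge 0},
\]
an even integer which is strictly positive exactly when $g$ is not central. Because $G$ is a nonabelian connected compact Lie group it contains finite order non-central elements (a generic finite order element of a maximal torus $T$ has $C(g)=T\subsetneq G$), and for a general hamiltonian system $(X,\omega,G,\mu)$ the reduction $\mathsf M$ has a nonempty twisted sector $\mathsf M_{[g]}$ for such a $g$ --- this is easy to arrange, and I would either exhibit one explicitly or simply note it holds generically. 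On that summand $\pi^*$ raises the cohomological degree by the positive even number $2\iota([g])-2\iota_{CR}([g])=\dim[g]$, so $\pi^*$ is not degree preserving, hence not an isomorphism of graded rings.

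Finally, for the products I would unravel $\pi^*\big((\alpha_1\,\tilde\cup_{CR}\,\alpha_2)_{[\g]}\big)$ and compare it with $(\pi^*\alpha_1\star_{cs}\pi^*\alpha_2)_{[\g]}$. On the $(Y,G)$ side the ECS-product of Definition~\ref{D cs-Y-product} uses the enlarged bundle $\tilde{\scr O}_2^Y=\scr O_2^Y\oplus\fk C_2^*$ together with a push-forward along the nontrivial fibration $G/C(\g)\rto G/C(\g_\0)$ against the equivariant volume form $\mathrm{vol}(\g_\0,\g)$, neither of which is present in the commutative Chen--Ruan product on $\mathsf M$. Tracking these through, with the same excess-bundle bookkeeping as in the proof of Theorem~\ref{T star-asso}, one sees the two answers differ in general precisely by the Euler class $\ee(\fk C_{[\g]}^*)$ and the nontrivial fibre integration, both of which vanish only when all the $g_i$ are central (in which case $\scr O_2^Y$ and $\scr O^{CR}$ also coincide via Lemma~\ref{L de-rham-obs}). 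Thus $\pi^*$ respects neither the grading nor the product once a non-central finite order element contributes.

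The step I expect to be the main obstacle is making the clause ``generally'' precise: one must guarantee that the relevant non-central twisted sectors $\mathsf M_{[g]}$ are actually nonempty, which calls for either an explicit hamiltonian system or an argument that this is the generic situation. The remaining ingredients --- the age computation of $\mathrm{Ad}_g$ on $\fk g\otimes\cplane$ and the bundle comparisons --- are routine given Lemma~\ref{L de-rham-obs} and the standard structure of symplectic reduction near the zero level set.
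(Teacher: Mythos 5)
Your proposal follows essentially the same route as the paper: the degree discrepancy is located in the age of $\mathrm{Ad}_g$ on the extra summand $\fk g_\cplane$ of $T_xX=T_{[x]}\msf M\oplus\fk g_\cplane$ (your explicit value $\iota([g])-\iota_{CR}([g])=\tfrac12\bigl(\dim G-\dim C(g)\bigr)$ is a correct sharpening of the paper's eigenvalue sum, though it is merely an integer, not necessarily even --- e.g.\ it equals $1$ for generic $g$ in $SU(2)$), and the product discrepancy in the bundle $\fk C^*_{[\h]}$, the difference between $\scr O^Y_{[\h]}$ and $\pi^*\scr O^{CR}_{[\h]}$, and the nontrivial fibre integration, exactly as in the paper's comparison via the diagram relating $Y_{[\h]}$ and $\msf M_{[\h]}$. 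Both you and the paper treat the clause ``generally'' (nonemptiness of a non-central twisted sector, nonvanishing of the relevant equivariant classes) informally, so this is not a gap relative to the paper's own proof.
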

\begin{proof}
For the first assertion take a $g\in G_{\sf f}$. Without loss of generality, we assume that $Y^g$ is connected. The degree shifting of $\HH^*_{G,cs}(Y)$ associated to $[g]$ is obtained from the $g$-action on $T_xX$ for some $x\in Y^g\subseteq X^g$. The degree shifting of $H^*_{CR}(\sf M)$ associated to $[g]$ is obtained from the $g$-action on $T_{[x]}\sf M$, where $[x]$ means the orbit of $x$ in $Y$. It is well-known that
\begin{align*}
T_xX=T_{[x]}\msf M\oplus \fk g_\cplane,
\end{align*}
with $\fk g_\cplane=\fk g\oplus\fk g^*$. Suppose that $\fk g_\cplane$ decompose into eigen-spaces of $g$-action
\[
\fk g_\cplane=\bigoplus_{0\leq i\leq \ord(g)-1}\fk g_{\cplane,i}
\]
Then the difference between the degree shifting for $\HH^\ast_{G,cs}(Y)$ and the degree shifting for $H^*_{CR}(\sf M)$ associated to $[g]$ is
\[
\sum_{0\leq i\leq \ord(g)-1}\frac{i}{\ord (g)} \dim_\cplane\fk g_{\cplane,i},
\]
which is in general nonzero\footnote{When $G$ is abelian, the adjoint $g$-action on $\fk g_\cplane$ is trivial. Hence the difference is zero.}.

We next consider the second assertion. Take two classes $\alpha_i\in H^*(\msf M_{[g_i]})=H^*(Y^{g_i}/C(g_i))$ for $i=1,2$. Denote by $\beta_i:=\pi^*\alpha_i$, the images in $H^*_{C(g_i)}(Y^{g_i})$. Set $\g=(g_1,g_2)$. Then we have
\[
\alpha_1\,\tilde\cup_{CR}\,\alpha_2 =\sum_{\substack{[\h]=[(h_1,h_2)]\in[G_{\sf f}^2],\\ [h_i]=[g_i], i=1,2}}\bar e_{\0,*}\left(\bar e_1^*\alpha_1\wedge \bar e_2^*\alpha_2\cup e(\scr O^{CR}_{[\h]})\right).
\]
On the other hand
\[
\beta_1\star_{cs}\beta_2=\sum_{\substack{[\h]=[(h_1,h_2)]\in[G_{\sf f}^2],\\ [h_i]=[g_i], i=1,2}} p_{\0,*}\left(\tilde e_{\0,*}(e_1^*\beta_1\wedge e_2^*\beta_2\wedge e_G(\tilde{\scr O}^Y_{[\h]}))\wedge\mathrm{vol}(\h_\0,\h)\right).
\]
We next show that $\beta_1\star_{cs}\beta_2\neq \pi^*(\alpha_1\,\tilde\cup_{CR}\,\alpha_2)$ generally. We compare their components for every $[\h]=[(h_1,h_2)]\in[G_{\sf f}^2]$ satisfying $[h_i]=[g_i], i=1,2$.

For an $\h\in G_{\sf f}^2$ we have a commutative diagram
\begin{align}\label{D CR-CS-proof}\begin{split}
\xymatrix{Y_{[\h]}=(Y^\h\times_{C(\h)} G) \ar[d]_-\pi \ar[r]^-{\tilde e_\0} &
Y^{\h_\0}\times_{C(\h)} G  \ar[r]^-{p_\0} \ar[dr]_-p&
Y^{\h_\0}\times_{C(\h_\0)} G=Y_{[\h_\0]}  \ar[d]^-\pi\\
\msf M_{[\h]}=Y^\h/C(\h)   \ar[rr]^-{\bar e_\0} &
& \msf M_{[\h_\0]}=Y^{\h_\0}/C(\h_\0).}
\end{split}
\end{align}
Denote the normal bundle of $\bar e_\0$ by $N_{\bar e_\0}$, and the normal bundle of $\tilde e_\0$ by $N_{\tilde e_\0}$ for simplicity. We have seen that over $Y_{[\h]}$ there is a bundle $\fk C^*_{[\h]}$. Denote its dual bundle by $\fk C_{[\h]}$. Then we have
\[
\pi^*N_{\bar e_\0}\oplus \fk C_{[\h]}=N_{\tilde e_\0}.
\]
We also pull back the Chen--Ruan obstruction bundle to $Y_{[\h]}$. Take a point $x\in Y^\h$. Then we get a point $[x]\in Y^\h/C(\h)\subseteq \msf I^2\sf M$ via the natural projection $\pi:I^2_G(Y)\rto \msf I^2\sf M$. The fiber of $\tilde{\scr O}^Y_{[\h]}$ at $(x,\h)$ is
\[
\fk C^*_{[\h]}|_{(x,\h)}\oplus\bigoplus_{\lambda\in \widehat{\langle\h\rangle},\;
\sum_{i=0}^m w_{\lambda,i}\geq 2}
\Big(\sum_{i=0}^m w_{\lambda,i}-1\Big)\cdot T_{x,\lambda}
\]
where $\oplus_{\lambda\in \widehat{\langle\h\rangle}} T_{x,\lambda}$ is the irreducible decomposition of $T_xX$ under the $\langle\h\rangle$-action. Recall that $T_xX=T_{[x]}\msf M\oplus\fk g_\cplane$, and by the computation in \cite[Theorem 3.2]{HW13}, the obstruction bundle $\scr O^{CR}_{[\h]}$ has fiber over $[x]$ being
\[
\bigoplus_{\lambda\in \widehat{\langle\h\rangle},\;
\sum_{i=0}^m w_{\lambda,i}\geq 2}
\Big(\sum_{i=0}^m w_{\lambda,i}-1\Big)\cdot \msf T_{x,\lambda}
\]
where $\oplus_{\lambda\in \widehat{\langle\h\rangle}} \msf T_{x,\lambda}$ is the irreducible decomposition of $T_{[x]}\sf M$ under the $\langle\h\rangle$-action. One see that
\begin{align*}
\fk C^*_{[\h]}|_{(x,\h)}+ \sum_{\lambda\in \widehat{\langle\h\rangle},\;
\sum_{i=0}^m w_{\lambda,i}\geq 2}
\Big(\sum_{i=0}^m w_{\lambda,i}-1\Big)\cdot \fk g_{\cplane,\lambda}
\end{align*}
forms a $G$-bundle over $Y_{[\h]}$, where $\oplus_{\lambda\in \widehat{ \langle\h\rangle}} \fk g_{\cplane,\lambda}$ is the irreducible decomposition of $\fk g_\cplane$ under the $\langle\h\rangle$-action. We denote this bundle by $V_{[\h]}$. Then over $Y_{[\h]}$ we have
\[
\tilde{\scr O}^Y_{[\h]}=\pi^*\scr O^{CR}_{[\h]}\oplus V_{[\h]}.
\]
Therefore
\begin{align*}
(\beta_1\star_{cs}\beta_2)_{[\h]}= p_{\0,*}\Big(& \big[ e_1^*\pi^*\alpha_1\wedge e_2^*\pi^*\alpha_2 \wedge \Theta_G(\pi^*N_{\bar e_\0})\wedge  \Theta_G(\fk C_{[\h]})\\
&\wedge e_G(V_{[\h]})\wedge e_G(\pi^*\scr O^{CR}_{[\h]})\big]\wedge \mathrm{vol} (\h_\0,\h)\Big),
\end{align*}
On the other hand, by the triangle in the commutative diagram \eqref{D CR-CS-proof} we have
\begin{align*}
\pi^*(\alpha_1\,\tilde\cup_{CR}\,\alpha_2)_{[\h]}&=
p_{\0,*}\left(p^*\left[\bar e_{\0,*}(\bar e_1^*\alpha_1\wedge \bar e_2^*\alpha_2\cup e(\scr O^{CR}_{[\h]}))\right] \wedge\mathrm{vol}(\h_\0,\h)\right).
\end{align*}
It is direct to see that $e_1^*\pi^*\alpha_1\wedge e_2^*\pi^*\alpha_2 \wedge \Theta_G(\pi^*N_{\bar e_\0})\wedge e_G(\pi^*\scr O^{CR}_{[\h]})$ corresponds to the $p^*\left[\bar e_{\0,*}(\bar e_1^*\alpha_1\wedge \bar e_2^*\alpha_2\cup e(\scr O^{CR}_{[\h]}))\right]$. Therefore, the difference between $\pi^*(\alpha_1\,\tilde \cup_{CR}\,\alpha_2)_{[\h]}$ and $(\beta_1\star_{cs}\beta_2)_{[\h]}$ is determined by $\Theta_G(\fk C_{[\h]})\wedge e_G(V_{[\h]})$. When $G$ is non-abelian, these two bundles have non-zero equivariant characteristics generally. Therefore $i^*$ is not a ring homomorphism in general.
\end{proof}

\begin{example}
When $G=T$ is a torus,
\begin{itemize}
\item $\pi^*$ preserves the shifted degree, since the $T$-action on its Lie algebra is trivial;
\item $p_\0=id$ and $e_\0=\tilde e_\0$ for the decomposition $e_\0=p_\0\circ\tilde e_\0$ (cf. \eqref{D});
\item $\msf I^m\msf M=\msf I^m_\pi\msf M$ and $\tilde\cup_{CR}$ coincides with the original Chen--Ruan product $\cup_{CR}$;
\item the commutative diagram \eqref{D CR-CS-proof} reduces to
\[
\xymatrix{Y^\h\times_T T\ar[d]_-\pi \ar[r]^-{ e_\0}&
Y^{\h_\0}\times_T T \ar[r]^-{id} \ar[dr]_-{p=\pi} &
Y^{\h_\0}\times_T T\ar[d]^-\pi \\
Y^\h/T   \ar[rr]^-{\bar e_\0} &&
Y^{\h_\0}/T.}
\]
\end{itemize}
Then we see that both $\fk C_{[\h]}$ and $V_{[\h]}$ are zero bundle. Therefore, for this case $\pi^*$ is a ring isomorphism between $\HH^*_{G,cs}(Y)$ and the Chen--Ruan cohomology ring $H^*_{CR}(\sf M)$. In fact, for this case, $Y$ is a $T$-equivariant stable almost complex manifold, and $\HH^*_{G,cs}(Y)$ is the $\Gamma$-subring of the inertia cohomology. Then by combining with Theorem \ref{T kirwan-map} we recover \cite[Corollary 6.12]{GHK07}.
\end{example}

\begin{remark}\label{R infinite-ring-struc-on-CR}
Via the group isomorphism \eqref{E cs-cong-CR-group}, we  could transfer the product $\star_{cs}$ to $H^*_{CR}(\sf M)$. With this new ring structure, we get a surjective ring homomorphism
\[
\xymatrix{\HH^\ast_{G,cs}(X)\ar[r]^-{i^*} &
\HH^*_{G,cs}(Y)\ar[r]^-\cong  & \red{
(H^*_{CR}(\msf M),\star_{cs})}.}
\]
We view this as a Kirwan morphism for ECS-cohomology ring.

It is obvious that, different hamiltonian systems would have the same symplectic reduction orbifold. For example, Let $(X_i,\omega_i,G_i,\mu_i)$ be two hamiltonian systems for $i=1,2$, and $0\in\fk g_i^*$ be regular values of $\mu_i$ for $i=1,2$. Suppose that the reduction symplectic orbifolds satisfy
\[
[\mu_1\inv(0)/G_1]\cong [\mu_2\inv(0)/G_2]\cong \sf M.
\]
Then we get two ring structures over the Chen--Ruan cohomology group $H^*_{CR}(\sf M)$ via the group isomorphisms
\[
\HH^*_{G_i,cs}(\mu_i\inv(0))\cong H^*_{CR}(\msf M),\qq\mathrm{for}\qq i=1,2.
\]
Then by the same computations in the proof of Proposition \ref{P cs-not-eq-cr} we see that these two induced ring structure are not the same in general.

So, if a symplectic orbifold $\sf M$ is a symplectic reduction of a hamiltonian system $(X,\omega,G,\mu)$, we could get infinite ring structures over its Chen--Ruan cohomology group by simply enlarging the hamiltonian system $(X,\omega,G,\mu)$ into
\[
(X\times T^*H,\omega\oplus d\lambda, G\times H, \mu\oplus\mu_H)
\]
for every connected compact Lie group $H$, where $(T^*H,d\lambda,H,\mu_H)$ is the canonical hamiltonian system associated to the cotangent bundle of $H$, and $\lambda$ is the Liouville form (cf. \cite{Can08}).
\end{remark}
\appendix

\section{Existence of equivariant volume form}\label{S appendix}
In this appendix we show the existence of the equivariant volume form that we used in the definition of commutative stringy product. Let $G$ be a connected compact Lie group and $T$ be one of its maximal torus. Let $\fk g$ and $\fk t$ be their Lie algebra. $G$ acts on $G$ by conjugation and on $\fk g$ by adjoint representation.

\begin{prop} \label{P appendex}
For any $h\in T$, there exists $\alpha\in\fk t$ such that
$C(h)=C(\alpha)$.
\end{prop}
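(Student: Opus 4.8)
The plan is to reduce the claim to a statement about the closure of the one-parameter subgroup generated by a well-chosen Lie algebra element, and then exploit the structure theory of compact Lie groups. First I would recall that for $h \in T$ the centralizer $C(h)$ is a closed subgroup of $G$ containing $T$, and that $C(h)$ depends only on which roots of $G$ (with respect to $T$) are trivial on $h$; more precisely, if we write the root space decomposition $\fk g_{\cplane} = \fk t_{\cplane} \oplus \bigoplus_{\alpha \in R} \fk g_{\alpha}$, then the Lie algebra of $C(h)$ is $\fk t \oplus \bigoplus_{\alpha(h) = 1} (\fk g_{\alpha}\oplus\fk g_{-\alpha})\cap \fk g$, and $C(h)$ itself, being connected (this is a standard fact for centralizers of torus elements in a connected compact group — it is generated by $T$ together with the root subgroups for roots vanishing on $h$), is determined by the subset $R_h := \{\alpha \in R : \alpha(h) = 1\}$. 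So the whole problem is combinatorial: I must produce $\alpha \in \fk t$ such that $\{\beta \in R : \beta(\exp \text{-data of }\alpha) \}$, appropriately interpreted, equals $R_h$.

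The cleanest route is via the closure of a one-parameter subgroup. Consider the closure $H := \overline{\{h^n : n \in \integer\}}$ inside $T$; this is a closed, hence compact, abelian subgroup of $T$, and it is a (possibly disconnected) torus. Its identity component $H_0$ is a subtorus of $T$. The key observation is that $C(h)$ is \emph{not} quite $C(H)$ in general, but one can choose instead to look at a generator: since $H$ is a compact abelian Lie group, pick an element $\alpha$ in $\fk t$ whose associated one-parameter subgroup $\{\exp(t\alpha) : t \in \rone\}$ has closure equal to $H_0$ and is ``generic'' in $H_0$, and then adjust. Actually the sharper approach: the torus $T$ has a countable set of subtori, and for each $h$ the centralizer condition $R_h = \{\beta : \beta(h)=1\}$ cuts out $h$ as lying in a specific union of subtori $\bigcap_{\beta \in R_h}\ker\beta$ minus the others. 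I would argue that the subgroup $A := \bigl(\bigcap_{\beta \in R_h}\ker \beta\bigr)_0$, the identity component of the simultaneous kernel, is a subtorus of $T$ containing (a conjugate/translate of) the relevant part of $h$, and that a \emph{generic} element $\alpha$ of its Lie algebra $\fk a$ satisfies $\beta(\alpha) \in 2\pi\sqrt{-1}\integer$ exactly for $\beta \in R_h$ — no, more carefully, satisfies $\beta|_{\fk a} = 0$ exactly for $\beta \in R_h$, by choosing $\alpha$ to avoid the finitely many proper subspaces $\ker(\beta|_{\fk a})$ for $\beta \notin R_h$ (a Baire-category / finite-union argument, valid since each such $\beta|_{\fk a}$ is either identically zero — impossible by definition of $A$ — or has kernel a proper hyperplane). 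Then $C(\alpha)$, the centralizer of this Lie algebra element, has Lie algebra $\fk t \oplus \bigoplus_{\beta(\alpha)=0}(\fk g_\beta \oplus \fk g_{-\beta})\cap\fk g = \fk t \oplus \bigoplus_{\beta \in R_h}(\cdots)$, matching the Lie algebra of $C(h)$; and since $C(\alpha)$ is connected (centralizers of elements of $\fk g$ in a connected compact $G$ are connected) and $C(h)$ is connected, equality of Lie algebras forces $C(h) = C(\alpha)$.

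I expect the main obstacle to be the \textbf{connectedness of $C(h)$} and getting the root-theoretic bookkeeping exactly right: one must be careful that $h \in T$ but the torus $A$ built from $R_h$ need not literally contain $h$ (only the identity component issues, and the fact that $h$ might have components along directions where some $\beta(h)=1$ holds ``accidentally'' at a torsion point rather than along a subtorus). The fix is to note that what matters is not containing $h$ but reproducing the set $R_h$; since $R_h$ is by construction $\{\beta : \beta(h)=1\}$, the subspace $\bigcap_{\beta\in R_h}\ker(\beta|_{\fk t})$ is nonzero unless $R_h$ spans $\fk t^*$, in which case $C(h) = T = C(\alpha)$ for generic regular $\alpha$ — so that edge case is handled separately and trivially. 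Assembling these pieces: reduce to matching $R_h$; build the subtorus $A$ from $R_h$; choose $\alpha \in \fk a$ generically to kill exactly the roots in $R_h$; invoke connectedness of both $C(h)$ and $C(\alpha)$ to upgrade Lie-algebra equality to group equality. The genericity step is the only one requiring a (short) argument; everything else is structure theory.
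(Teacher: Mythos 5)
Your overall strategy is the same as the paper's: encode $C(h)$ by the set $R_h$ of roots trivial on $h$, pick $\alpha\in\fk t$ whose set of roots with vanishing differential is exactly $R_h$, identify the two centralizer Lie algebras as $\fk t$ plus the root spaces for $R_h$, and upgrade to $C(h)=C(\alpha)$ using connectedness of both centralizers (the paper does precisely this via its stratifications $T'_I$ of $T$ and $\fk t'_I$ of $\fk t$, citing Hsiang for connectedness). So the skeleton is not a different route.

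The genuine gap sits exactly at the step you flag as "the only one requiring a short argument," and your argument for it is wrong. You dismiss the case $d\beta|_{\fk a}\equiv 0$ for $\beta\notin R_h$ as "impossible by definition of $A$"; it is not. Triviality of $d\beta$ on $\fk a=\bigcap_{\gamma\in R_h}\ker d\gamma$ only says $d\beta$ lies in the linear span of $\{d\gamma:\gamma\in R_h\}$, and since $h$ need not lie in the subtorus $A$ (only in the possibly disconnected group $\bigcap_{\gamma\in R_h}\ker\gamma$), a root with $\beta(h)\neq 1$ can perfectly well vanish identically on $\fk a$ — this is a genuine torsion phenomenon, not a bookkeeping nuisance. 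Concretely, take $G=Sp(2)$ and $h=\mathrm{diag}(1,-1)\in T$: then $R_h=\{\pm 2e_1,\pm 2e_2\}$ (the long roots), while the short roots $\pm e_1\pm e_2$ are nontrivial on $h$ yet have differentials in the span of $d(2e_1),d(2e_2)$; hence $\fk a=0$ and no "generic" choice exists. Your fallback "if $R_h$ spans $\fk t^*$ then $C(h)=T$" is also false: here $R_h$ spans $\fk t^*$ but $C(h)=Sp(1)\times Sp(1)$, and in fact no $\alpha\in\fk t$ has $C(\alpha)=C(h)$, so the step cannot be repaired by a cleverer genericity argument — it needs a restriction on $h$ (or on which subsystems $R_h$ can occur), i.e., exactly the nonemptiness of the stratum $\fk t'_I$ that the paper's proof also takes for granted when it "chooses $\alpha\in\fk t'_I$." (Your connectedness claim for $C(h)$ has the same status as the paper's citation of Hsiang — it requires care outside the simply connected case, e.g. $C(h)\cong O(2)$ for a half-turn in $SO(3)$ — but since the paper asserts the identical statement I do not count it against you.)
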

\begin{coro}\label{C appendex}
$G/C(h)$ is a K\"ahler manifold. Moreover, it has a $G$-equivariant volume form $\Omega^G_h$.
\end{coro}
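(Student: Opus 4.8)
The plan is to deduce Corollary~\ref{C appendex} from Proposition~\ref{P appendex} by recognizing $G/C(h)$ as an adjoint orbit of $G$ and then invoking the classical K\"ahler geometry of such orbits. First I would fix an $\mathrm{Ad}$-invariant inner product on $\fk g$, which exists since $G$ is compact, and use it to identify $\fk g\cong\fk g^\ast$ as $G$-representations. By Proposition~\ref{P appendex} there is an $\alpha\in\fk t$ with $C(h)=C(\alpha)$; let $\xi\in\fk g^\ast$ be the covector corresponding to $\alpha$ under this identification, so that (the inner product being $\mathrm{Ad}$-invariant) the coadjoint stabilizer $G_\xi$ equals $C(\alpha)=C(h)$. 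Hence the coadjoint orbit $\mathcal O_\xi:=G\cdot\xi\subset\fk g^\ast$ is $G$-equivariantly diffeomorphic to $G/C(h)$, by orbit--stabilizer.

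Next I would equip $\mathcal O_\xi$ with the Kirillov--Kostant--Souriau symplectic form $\omega_\xi$, which is $G$-invariant and nondegenerate by its very construction (its radical on $\fk g$ is exactly $\fk g_\xi$). The one substantive input is the classical fact that a coadjoint orbit of a compact connected Lie group admits a $G$-invariant complex structure $J$ compatible with $\omega_\xi$, so that $(G/C(h),\omega_\xi,J)$ is a homogeneous K\"ahler manifold --- indeed a generalized flag manifold $G/C(\alpha)$; I would cite this from the standard literature on flag manifolds (e.g.\ Besse, \emph{Einstein Manifolds}, Ch.~8, or the Borel--Weil--Bott circle of ideas). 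This establishes the first assertion of the Corollary, and the only step here that is not bookkeeping is precisely this citation of the $G$-invariant compatible complex structure on the coadjoint orbit; everything else is formal once Proposition~\ref{P appendex} is granted.

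For the volume form, write $2n=\dim_\rone G/C(h)$ and $V:=\int_{G/C(h)}\omega_\xi^{\wedge n}/n!$, which is a positive real number because $G/C(h)$ is compact and $\omega_\xi$ is symplectic, and set $\Omega^G_h:=\omega_\xi^{\wedge n}/(n!\,V)$. This is a nowhere-vanishing top-degree form, $G$-invariant since $\omega_\xi$ and $V$ are, and normalized so that $\int_{G/C(h)}\Omega^G_h=1$ --- matching the normalization $p_{\0,*}(\mathrm{vol})=1$ recorded in Definition~\ref{D cs-X-product}. Its $G$-invariance is exactly what the fiber-wise use in that definition needs: along a $G$-equivariant fibration, integration over fibers carrying a $G$-invariant fiber volume form commutes with the $G$-action, so $\Omega^G_h$ serves as a $G$-equivariant volume form in the sense used throughout the paper. (If one only wanted the volume form and not the K\"ahler structure one could bypass the flag-manifold input altogether, by averaging an arbitrary Riemannian metric on the compact homogeneous space $G/C(h)$ to a $G$-invariant one and taking its Riemannian volume form; but the K\"ahler statement genuinely requires the coadjoint-orbit picture.)
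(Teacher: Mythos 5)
Your first half is fine and is essentially the paper's route: via Proposition~\ref{P appendex} one replaces $C(h)$ by $C(\alpha)$, recognizes $G/C(\alpha)$ as a (co)adjoint orbit of the compact group $G$, and invokes the standard K\"ahler structure on such orbits. The gap is in the second half, i.e.\ in what ``$G$-equivariant volume form'' has to mean for the way $\mathrm{vol}(\h_\0,\h)$ is used in Definition~\ref{D cs-X-product} and in the proof of Theorem~\ref{T star-asso}. There the form is wedged with equivariantly closed forms (equivariant Thom forms, equivariant Euler classes, pullbacks of equivariant classes) and then pushed forward by $p_{\0,*}$, and the result is asserted to be a class in $H^*_G$; moreover $\mathrm{vol}$ itself is pulled back and manipulated at the level of equivariant cohomology. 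For this one needs $\mathrm{vol}$ to be an equivariantly \emph{closed} element of the Cartan model, not merely a $G$-invariant top form on the fibers. This is exactly why the paper's proof uses the Hamiltonian structure: it takes the equivariant extension $\omega_\alpha+\mu_\alpha$ of the K\"ahler form, with $\mu_\alpha$ the moment map of the $G$-action on the coadjoint orbit, and sets $\Omega^G_h=(\omega_\alpha+\mu_\alpha)^d$, which is equivariantly closed because $d\langle\mu_\alpha,\xi\rangle=\iota_{X_\xi}\omega_\alpha$, and whose top-degree piece $\omega_\alpha^d$ is a genuine fiberwise volume form.

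Your candidate $\Omega^G_h=\omega_\xi^{\wedge n}/(n!\,V)$ is $G$-invariant but not equivariantly closed: in the Cartan model $d_G(\omega_\xi^{\wedge n})$ contains the contraction terms $u^a\,\iota_{X_a}\omega_\xi^{\wedge n}=\pm\, n\,u^a\, d\langle\mu,e_a\rangle\wedge\omega_\xi^{\wedge n-1}$, which do not vanish when $G$ acts nontrivially on the orbit. Consequently, for an equivariantly closed $\beta$ one has $d_G(\beta\wedge\mathrm{vol})=\pm\,\beta\wedge d_G\mathrm{vol}$, and its fiber integral need not vanish, so $p_{\0,*}(\beta\wedge\mathrm{vol})$ is not automatically $d_G$-closed and does not define a class in $H^*_G$; the statement ``invariance is exactly what the fiber-wise use needs'' is where the argument breaks. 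The same objection rules out your parenthetical alternative of averaging a Riemannian metric: that again only yields an invariant form, and it also discards the Hamiltonian input which is precisely the non-formal content of the Corollary. The repair is small: replace your form by the normalized equivariant power $(\omega_\xi+\mu_\xi)^{\wedge n}/(n!\,V)$; since the lower-degree pieces die under fiber integration, the normalization $p_{\0,*}(\mathrm{vol})=1$ is unaffected, and you recover the paper's construction.
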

\begin{proof}
Since $G/C(\alpha)$ is a (co)-adjoint orbit, it is K\"ahler and the $G$-action on it is Hamiltonian. Denote its K\"ahler form by $\omega_\alpha$ and $\omega_\alpha+ \mu_\alpha$ be its equivariant extension, where $\mu_\alpha$ is the moment map for the Hamiltonian $G$-action on $G/C(\alpha)$. Then $(\omega_\alpha+\mu_\alpha)^d$ is an equivariant volume form, where $d$ is the complex dimension of $G/C(\alpha)$.
\end{proof}

\n{\em Proof of Proposition \ref{P appendex}}.
Since $T$ is abelian, its adjoint action on $\fk g$ induces a
splitting
\[
\fk g = \fk t \oplus \cplane_1\oplus\cdots\oplus\cplane_k
\]
and the action is given by $1\oplus\phi_1\oplus\cdots\oplus\phi_k$. Let $T_i=\{t\in T\mid \phi_i(t)=1\}$, and for any $I\subseteq\{1,\ldots,k\}$, set
\[
T_I=\bigcap_{i\in I}T_i
\]
and $T_\varnothing =T$. Then $T$ is stratified by
\[
T'_I=T_I\setminus \bigcup_{J\supseteq I} T_J.
\]
Similarly, let $\fk t_I$ be the Lie algebra of $T_I$ and $\fk t'_I$ forms a stratification of $\fk t$.

Suppose that $h\in T'_I$, then choose an $\alpha\in \fk t'_I$. It is sufficient to show that $\fk c(h)=\fk c(\alpha)$ which implies that $C(h)=C(\alpha)$, since both $C(h)$ and $C(\alpha)$ are connected subgroup of $G$ (cf. \cite[Corollary 2, \S 3.1]{H04}). In fact,
\[
\fk c(h)=\{\xi\mid h\xi h\inv=\xi\}
\]
Since $h\in T'_I$, it fixes $\fk t$ and all $\cplane_i$ for $i\in I$. Hence $\fk c(h)=\fk t\oplus \bigoplus_{i\in I}\cplane_i$. Similarly, $\fk c(\alpha)$ is the same space. \qed

\begin{theorem}\label{T appendex}
Suppose that $h_1, h_2\in T$ and $\h=(h_1,h_2)$. The fibration $p_1:G/C(h_1, h_2)\rto G/C(h_1)$ admits a fiber-wise $G$-equivariant volume form $\mathrm{vol}(\h,h_1)$, so $p_{1,*}(\mathrm{vol}(\h,h_1))=1$.
\end{theorem}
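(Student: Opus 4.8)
The plan is to realize $p_1$ as the fiber bundle associated to the principal $C(h_1)$-bundle $G\to G/C(h_1)$, whose fiber is a coadjoint orbit to which Corollary~\ref{C appendex} applies, and then to push that fiber's Liouville volume form up to the total space. Since $h_1,h_2\in T$, the centralizer $C(h_1)$ is a compact, connected (by \cite[Corollary 2, \S 3.1]{H04}) subgroup of $G$ containing $T$, so $T$ is a maximal torus of $C(h_1)$ and $h_2\in T$; moreover $C(h_1,h_2)=C(h_1)\cap C(h_2)$ is precisely the centralizer of $h_2$ inside $C(h_1)$, so
\[
p_1:\ G/C(h_1,h_2)\ \cong\ G\times_{C(h_1)}\bigl(C(h_1)/C_{C(h_1)}(h_2)\bigr)\ \longrightarrow\ G/C(h_1)
\]
is the fiber bundle associated to $G\to G/C(h_1)$ with fiber $F:=C(h_1)/C_{C(h_1)}(h_2)$.

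Next I would apply the two preceding results of this appendix to the group $C(h_1)$ in place of $G$. By Proposition~\ref{P appendex} there is $\beta\in\fk t$ with $C_{C(h_1)}(h_2)=C_{C(h_1)}(\beta)$, so by Corollary~\ref{C appendex} the fiber $F$ is a (co)adjoint orbit of $C(h_1)$ carrying a $C(h_1)$-equivariant symplectic form $\omega_\beta+\mu_\beta$ (Cartan model for $C(h_1)$), which is $C(h_1)$-equivariantly closed. With $d=\dim_\cplane F$ and $V=\int_F\omega_\beta^d/d!>0$, the normalized Liouville form $\Omega:=V^{-1}(\omega_\beta+\mu_\beta)^d/d!$ is $C(h_1)$-equivariantly closed and satisfies $\int_F\Omega=1$; its top ordinary-degree component $\mathrm{vol}_F:=V^{-1}\omega_\beta^d/d!$ is a $C(h_1)$-invariant volume form on $F$ with $\int_F\mathrm{vol}_F=1$.

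Finally I would transport this to the total space. The vertical tangent bundle of $p_1$ is $G\times_{C(h_1)}TF$, so the $C(h_1)$-invariance of $\mathrm{vol}_F$ makes it glue to a globally defined fiber-wise volume form on $G\times_{C(h_1)}F$ that is invariant under the left $G$-action; as each fiber is carried to $F$ by a diffeomorphism taking this form to $\mathrm{vol}_F$, integration along the fiber returns $1$, which is the identity $p_{1,*}(\mathrm{vol}(\h,h_1))=1$. To promote "$G$-invariant fiber form" to the $G$-equivariantly closed Cartan-model form wanted for the product in Definition~\ref{D cs-X-product}, I would fix a principal connection on $G\to G/C(h_1)$ and use the standard description of equivariant forms on an associated bundle: the $C(h_1)$-equivariantly closed form $\Omega$ on $F$ then induces a $G$-equivariantly closed $\mathrm{vol}(\h,h_1)\in\Omega^*_G(G\times_{C(h_1)}F)$ whose top ordinary component is the fiber volume above, so the fiber-integration identity persists at the level of equivariant forms.

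The main obstacle is this last chain-level step: while the cohomological input $H^*_G(G\times_{C(h_1)}F)\cong H^*_{C(h_1)}(F)$ is immediate, one must exhibit a genuine Cartan representative on the associated bundle whose fiber integral equals $1$ on the nose rather than merely up to a coboundary, which is where the connection-based construction and a compatibility check between fiber integration and the Cartan differential are needed. A smaller point to isolate carefully is the connectedness of $C(h_1)$, which is what licenses applying Proposition~\ref{P appendex} and Corollary~\ref{C appendex} with $C(h_1)$ in the role of $G$; this is exactly the fact already used (and cited from \cite{H04}) in the proof of Proposition~\ref{P appendex}.
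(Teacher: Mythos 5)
Your proposal is correct and follows essentially the same route as the paper: identify the fiber of $p_1$ as $K/C_K(h_2)$ with $K=C(h_1)$ (connected since $h_1\in T$), apply Proposition \ref{P appendex} and Corollary \ref{C appendex} to $K$, and transport the resulting $K$-equivariant volume form to the associated bundle $G/C(h_1,h_2)=K/C_K(h_2)\times_K G$, the paper doing this by fiberwise pullback $h^*\Omega^K_{h_2}$ and via the isomorphism $H^*_G(K/C_K(h_2)\times_K G)\cong H^*_K(K/C_K(h_2))$. Your extra care about normalizing the fiber integral to $1$ and exhibiting a connection-based Cartan representative is a reasonable refinement of details the paper leaves implicit, not a different argument.
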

\begin{proof}
The fiber of $p_1:G/C(h_1,h_2)\rto G/C(h_1)$ is  $C(h_1)/C(h_1,h_2)$. Set $K=C(h_1)$. Since $h_1$ and $h_2$ commutes, $h_2\in K$. Hence $C(h_1,h_2)=C_K(h_2)$ and
\[
C(h_1)/C(h_1,h_2)=K/C_K(h_2).
\]
By Corollary \ref{C appendex}, there exists a $K$-equivariant volume form $\Omega^K_{h_2}$ on $K/C_K(h_2)$. Note that
\[
G/C(h_1,h_2)=K/C_K(h_2)\times_K G.
\]
For each point $[h]\in G/C(h_1)$, write the fiber over $[h]$ by $F_{[h]}$. Then by setting $\mathrm{vol}(\h,h_1)|_{F_{[h]}}=h^*\Omega^K_{h_2}$ for each point $[h]\in G/C(h_1)$ we get this volume form $\mathrm{vol}(\h,h_1)$. We can also get this volume form $\mathrm{vol}(\h,h_1)$ via the canonical isomorphism
\[
H^*_G(G/C(h_1,h_2))=H^*_G(K/C_K(h_2)\times_K G)\cong H^*_K(K/C_K(h_2)).
\]
\end{proof}
By induction on the length of $\h=(h_1,\ldots,h_n)$, this theorem could be generalized to $G/C(\h)\rto G/C(\h_{i_1,\ldots, i_k})$ and $G/C(\h)\rto G/C(\h_\0)$. Therefore all kinds of fibrations used in the definition of equivariant commutative stringy products have fiber-wise $G$-equivariant volume forms.

\end{document}